\numberwithin{equation}{section}
\newtheorem{thm}{Theorem}[section]
\newtheorem{lem}{Lemma}[section]
\newtheorem{rem}{Remark}[section]
\newcommand{\beq}{\begin{eqnarray}}
\newcommand{\eeq}{\end{eqnarray}}
\newcommand{\beqno}{\begin{eqnarray*}}
\newcommand{\eeqno}{\end{eqnarray*}}
\newcommand{\be}{\begin{equation}}
\newcommand{\ee}{\end{equation}}
\theoremstyle{definition}
\newtheorem{defn}{Definition}[section]
\begin{document}
\title{\bf Hydrodynamic limit for compressible Navier-Stokes-Vlasov-Poisson equations with local alignment force}
\author{ Yunfei Su $^{1}$\thanks{E-mail:
		suyfmath1003@163.com},\ \
	Lei Yao$^{2}$\thanks{ E-mail:
		yaolei1056@hotmail.com}
	\\
	\textit{\small
		1. School of Mathematics, North University of China, Taiyuan  030051, P.R. China} \\
	\textit{\small 2.
		School of Mathematics and Statistics, Northwestern Polytechnical University, Xi'an 710129, P.R. China }}
\date{}
\maketitle
\begin{abstract}
We investigate the  hydrodynamic limit of weak solutions to compressible Navier-Stokes-Vlasov-Poisson equations with local alignment force in three-dimensional torus  domain.  Due to the absence of dissipation terms in particle equation, it is difficult to study this problem.  Based on  the relative entropy method, it is shown that the global weak solutions of the compressible Navier-Stokes-Vlasov-Poisson equations converge to the smooth solutions of the limiting two-phase fluid model.
We obtained that the distribution function $f^{\epsilon}$ converges to a Dirac distribution in velocity, the fluid density $\rho^{\epsilon}$ and velocity $u^{\epsilon}$ converge  to $\rho$ and $u$, respectively.  
\vspace{4mm}

 {\textbf{Keyword:}  Compressible Navier-Stokes-Vlasov-Poisson equations; Hydrodynamic limit; Local alignment force; Weak Solutions.}\\

  {\textbf{2020 Mathematics Subject Classification:} 35Q30; 35Q70; 35Q83.}
\end{abstract}

\section{Introduction}
We consider the compressible Navier-Stokes-Vlasov-Poisson equations in $\mathbb{R}^{+}\times\mathbb{T}^{3}\times\mathbb{R}^{3}$:
\begin{equation}\label{O-eq1.1-1}
\left\{
\begin{array}{l}
\partial_{t}f+\xi\cdot\nabla_{x}f+\mathrm{div}_{\xi}((u-\xi-\nabla_{x}\Phi)f)=-\mathrm{div}_{\xi}((u_{f}-\xi)f),\\
-\Delta_{x}\Phi=\rho_{f}-1,\\
\partial_{t}\rho+\mathrm{div}_{x}(\rho u)=0,\\
\partial_{t}(\rho u)+\mathrm{div}_{x}(\rho u \otimes u)+\nabla_{x}p-\Delta_{x} u
=-\int_{\mathbb{R}^{3}}(u-\xi)f~d\xi,\\
\end{array}
\right.
\end{equation}
where $x\in\mathbb{T}^{3}$ is the spatial variable,  $t\in\mathbb{R}^{+}$ is the time variable, $\xi\in \mathbb{R}^{3}$ is the velocity of the particles.
This system of nonlinear PDEs describes the dynamics of charged particles immersed in a compressible viscous fluid and belongs to the broad family of the fluid-particle system.
  The motion of numerous small solid charged particles  can be modeled by the Vlasov-Poisson equations and compressible viscous fluid can be described by the  Navier-Stokes equations.
Here,
 the unknown function $f(t,x,\xi )$  defined in the phase space $\mathbb{T}^{3} \times \mathbb{R}^{3}$ at time $t\in\mathbb{R}^{+}$ denotes the distribution function of the particles,
$\rho(t,x)\geq0$,  $u(t,x)$ and  $p(t,x)$ denote  density, velocity field and pressure of fluids, respectively. $\Phi(t,x)$ is the potential of the electric field generated by the charged particles.
The coupling is made of a drag term in the Vlasov equation $\eqref{O-eq1.1-1}_{1}$, and of a source term in the Navier-Stokes equation $\eqref{O-eq1.1-1}_{3}-\eqref{O-eq1.1-1}_{4}$, called the Brinkman force, which is used to describe the exchange of momentum between the particles and the fluid.

   Besides,  $\rho_{f}(t,x)$ and $u_{f}(t,x)$ denote the local averaged particle density and velocity, respectively, i.e.,
   \begin{equation*}
   \rho_{f}(t,x):=\int_{\mathbb{R}^{3}}f(t,x,\xi)~d\xi,~~~u_{f}(t,x):=\frac{1}{\rho_{f}(t,x)}\int_{\mathbb{R}^{3}}\xi f(t,x,\xi)~d\xi.
   \end{equation*}
   Motivated by Goudon, Jabin and Vasseur \cite{Goudon2004a,Goudon2004} for a dimensionless analysis, we can rewrite
   (1.1) into the dimensionless form as follows:
  \begin{equation}\label{O-eq1.1}
  \left\{
  \begin{array}{l}
  \partial_{t}f^{\epsilon}+\xi\cdot\nabla_{x}f^{\epsilon}+\mathrm{div}_{\xi}((u^{\epsilon}-\xi-\nabla_{x}\Phi^{\epsilon})f)=-\frac{1}{\epsilon}\mathrm{div}_{\xi}((u^{\epsilon}_{f}-\xi)f^{\epsilon}),\\
  -\Delta_{x}\Phi^{\epsilon}=\rho^{\epsilon}_{f}-1,\\
  \partial_{t}\rho^{\epsilon}+\mathrm{div}_{x}(\rho^{\epsilon} u^{\epsilon})=0,\\
  \partial_{t}(\rho^{\epsilon} u^{\epsilon})+\mathrm{div}_{x}(\rho^{\epsilon} u^{\epsilon} \otimes u^{\epsilon})+\nabla_{x}p^{\epsilon}-\Delta_{x} u^{\epsilon}
  =-\int_{\mathbb{R}^{3}}(u^{\epsilon}-\xi)f^{\epsilon}~d\xi.
  \end{array}
  \right.
  \end{equation}
   The system \eqref{O-eq1.1} subjects to the following initial conditions:
   \begin{align}\label{O-ini1}
   &f^{\epsilon}(0,x,\xi)=f^{\epsilon}_{0}(x,\xi)\geq 0,~~~\Phi^{\epsilon}(0,x)=\Phi_{0}^{\epsilon}(x),~~~\rho^{\epsilon}(0,x)=\rho^{\epsilon}_{0}(x),~~~ u^{\epsilon}(0,x)=u^{\epsilon}_{0}(x).
   \end{align}

   In this paper, we prove the hydrodynamic limit of weak solutions to the problem  \eqref{O-eq1.1}-\eqref{O-ini1}. Motivated by Choi-Jung \cite{Choi2023}, we firstly notice that the potential $\Phi$ can be represented by using the interaction potential $K$ which satisfies the following conditions:

  (1) The potential $K$ is an even function which can be written as $K(x)=c_{1}|x|^{-1}+G_{1}(x)$,
  where $c_{1}>0$ is normalization constant and $G_{1}$ is smooth function over $\mathbb{T}^{3}$.

  (2) For any $h\in L^{2}(\Omega)$ with $\int_{\mathbb{T}^{3}} h ~dx=0$, $\Phi:=K*h\in H^{1}(\mathbb{T}^{3})$
  is the unique function that satisfies the following conditions:
  $$\int_{\mathbb{T}^{3}} \Phi ~dx=0,$$
  and
  $$\int_{\mathbb{T}^{3}} \nabla_{x}\Phi\cdot\nabla_{x}\psi ~dx=\int_{\mathbb{T}^{3}} h\psi~dx,$$
  for any $\psi\in H^{1}(\mathbb{T}^{3})$, i.e., $\Phi$ is the unique weak solution to $-\Delta \Phi=h.$
  Then the system \eqref{O-eq1.1} can be  rewritten  as follows:
  \begin{equation}\label{O-written-eq1.1}
  \left\{
  \begin{array}{l}
  \partial_{t}f^{\epsilon}+\xi\cdot\nabla_{x}f^{\epsilon}+\mathrm{div}_{\xi}((u^{\epsilon}-\xi-\nabla_{x}K*(\rho^{\epsilon}_{f}-1))f^{\epsilon})=-\frac{1}{\epsilon}\mathrm{div}_{\xi}((u^{\epsilon}_{f}-\xi)f^{\epsilon}),\\
  \partial_{t}\rho^{\epsilon}+\mathrm{div}_{x}(\rho^{\epsilon} u^{\epsilon})=0,\\
  \partial_{t}(\rho^{\epsilon} u^{\epsilon})+\mathrm{div}_{x}(\rho^{\epsilon} u^{\epsilon}\otimes u^{\epsilon})+\nabla_{x}p^{\epsilon}-\Delta_{x} u^{\epsilon}
  =-\int_{\mathbb{R}^{3}}(u^{\epsilon}-\xi)f^{\epsilon}~d\xi.\\
  \end{array}
  \right.
  \end{equation}

  In the past decades years, the existence of global solutions to the fluid-particle model has been investigated extensively. For the fluid-particle model with the local alignment force,
  Choi and Jung \cite{Choi2021} obtained the global existence of weak solutions to the compressible Navier-Stokes-Vlasov-Fokker-Planck equations with homogeneous Dirichlet boundary condition for the fluid velocity and the specular reflection boundary condition for the kinetic equation. And later,
  Li and Li \cite{Li2021} studied the global existence of  weak solutions for the Vlasov-Fokker-Planck equation coupled with the compressible Navier-Stokes equations with nonhomogeneous Dirichlet boundary condition.
  Choi and Jung \cite{Choi2023} proved the global existence of weak solutions to the incompressible Navier-Stokes-Vlasov-Poisson(-Fokker-Planck) equation on $\mathbb{T}^{d}$($d\geq2$).
  When  the local alignment force is absent and the particles are not charged to the fluid-particle model, there are many meaningful results about the global existence of solutions to the Vlasov(-Fokker-Planck) equations coupled with the incompressible/inhomogeneous incompressible Navier-Stokes equations, cf. \cite{Boudin2009,Boudin2017,Chae2011,Choi2015,Goudon2010,Hamdache1998,Wang2015,Yu2013} and coupled with the compressible Navier-Stokes equations, cf.
  \cite{Li2017,Mellet2007}. When the dynamics of charged particles immersed in an incompressible viscous fluid,  Anoshchenko, Khruslov and Stephan
  \cite{Anoshchenko2010} studied the global existence of weak solutions to the incompressible Navier-Stokes-Vlasov-Poisson equations in the bounded domain; furthermore, the first two authors and Iegorov \cite{Anoshchenko2014} proved the  global weak solutions to the incompressible Navier-Stokes-Vlasov-Fokker-Planck-Poisson equations. Chen, Li, Li and Zamponi \cite{Chen2023} obtained the global existence of  weak solutions to the compressible Navier-Stokes-Vlasov-Poisson-Fokker-Planck system in a three-dimensional bounded domain with nonhomogeneous Dirichlet boundary conditions.  The mathematical analysis of other related models, such as Euler-Vlasov equations \cite{Baranger2006},  the Navier-Stokes-Vlasov-Boltzmann(Poisson) \cite{Cui2021,Gamba2020,Yao2018}, MHD/Vlasov-Fokker-Planck equations and Euler-Maxwell-Vlasov-Fokker-Planck equations \cite{Jiang2017,Jiang2020} and so on, which have also received
  much attentions now.

   There are also some results about  the hydrodynamic limit of
   weak solutions to the fluid-particle model. When the local alignment
   force is involved in the kinetic equation, i.e.,  the standard Cucker-Smale alignment term and the Motsch-Tadmor alignment term are involved.
Karper, Mellet and Trivisa \cite{Karper2015}   introduced a strong local
   alignment interaction as the singular limit of the alignment proposed by Motsch-Tadmor \cite{Motsch2011}, who improved the Cucker-Smale model by considering new interaction, which is non-local and non-symmetric alignment. Based on the relative entropy method and the weak compactness argument,
   Kang and Vasseur \cite{Kang2015} studied that   weak solution  of the Vlasov-type equation converges to strong solution  of the pressureless Euler system.
   Carrillo, Choi and Karper \cite{Carrillo2016} considered the hydrodynamic limit of weak solutions
   to a kinetic flocking model coupled with the incompressible Navier-Stokes equations  on $\mathbb{T}^{3}$; After that,
    the result of \cite{Carrillo2016} was  extended by Choi and Jung \cite{Choi[2020]copyright2020} to  the compressible case with the density-dependent viscosity.
   Recently, Choi and Jung \cite{Choi2021}
   proved the hydrodynamic limit of weak solutions to the compressible Navier-Stokes-Vlasov-Fokker-Planck equations in a three-dimensional bounded domain, where the fluid velocity satisfies the homogeneous Dirichlet boundary condition and the particle distribution function satisfies the reflection boundary condition. Shi-Su-Yao \cite{Shi2023} studied the asymptotic analysis of weak solutions to the 1D compressible {N}avier-{S}tokes-{V}lasov equations with local alignment force. For other related  fluid-particle model,
   such as the Cucker-Smale model \cite{Figalli2019,Karper2015}, the incompressible(compressible) Navier-Stokes-Cucker-Smale equations \cite{Bae2012,Bae2014} and so on.
   For the fluid-particle model without the local alignment force,
    Goudon \cite{Goudon2001} proved the hydrodynamic limit and stratified limit to 1D viscous Burgers-Vlasov equations. For the multi-dimensional case, Goudon, Jabin and Vasseur \cite{Goudon2004a,Goudon2004} considered two classes of hydrodynamic
   limit to the incompressible Navier-Stokes-Vlasov-Fokker-Planck equations; Mellet and
   Vasseur \cite{Mellet2008} further studied the compressible case. Lately,
   the result of \cite{Goudon2004} and \cite{Mellet2008} was  extended by Su and Yao \cite{Su2020} to  the inhomogeneous incompressible  case.
   Recently, Han-Kwan and Michel \cite{Han-Kwan2024} proved
   the two classes of hydrodynamic limits for the incompressible Navier-Stokes-Vlasov equations
   on $\mathbb{T}^{3}$, referred to as the light particles and  fine particle regimes. Su-Wu-Yao-Zhang \cite{Su2023} established that the hydrodynamic limit of weak solutions to the inhomogeneous incompressible Navier-Stokes-Vlasov equations when the initial density is bounded away from zero.
   About hydrodynamic limits results for the other related models, cf. \cite{Aceves-Sanchez2019,Kang2020} and
   so on.

   However, the hydrodynamic limit of weak solutions
   to the compressible Navier-Stokes-Vlasov-Poisson equations remains unstudied so far,  this paper will focus on this problem.

  \bigbreak
   {\bf{Notations.}} We would like to introduce some notations for later use:
   We  denote by $C$ a generic positive constant which may
   differ from line to line.
   We denote $L^{p}(\mathbb{T}^{3})(1\leq p \leq \infty)$ the $L^p$ spaces with norm
   $\|\cdot\|_{L^{p}(\mathbb{T}^{3})}$. When there is no confusion, we will write 	·	$\|\cdot\|_{H^{k}(\mathbb{T}^{3})}$, 	 $\|\cdot\|_{L^{p}(\mathbb{T}^{3})}$	 as $\|\cdot\|_{H_{x}^{k}}$, 	$\|\cdot\|_{L_{x}^{p}}$,  respectively.
And we also write	$\|\cdot\|_{L^{p}((0,T)\times\mathbb{T}^{3})}$ 	 as 	$\|\cdot\|_{L_{t,x}^{p}}$ $(1\leq p\leq \infty)$.

   \bigbreak

   We firstly give the definition of weak solutions to the problem \eqref{O-eq1.1}-\eqref{O-ini1}.
\begin{defn}\label{O-Def3.1}
	We say $\left(f^{\epsilon}(t,x,\xi),\rho^{\epsilon}(t,x),u^{\epsilon}(t,x)\right)$ is a global weak solution of the problem \eqref{O-eq1.1}-\eqref{O-ini1}, if for any $T>0$, the following properties hold:
	\begin{itemize}
		\item $f^{\epsilon}(t,x,\xi)\geq0$,\ for any $(t,x,\xi)\in ((0,T)\times\mathbb{T}^{3}\times \mathbb{R}^{3})$;
		\item $f^{\epsilon}\in L^{\infty}(0,T;L^{1}\cap L^{\infty}(\mathbb{T}^{3}\times \mathbb{R}^{3}))$;
		\item $|\xi|^{2}f^{\epsilon}\in L^{\infty}(0,T;L^{1}(\mathbb{T}^{3}\times \mathbb{R}^{3}))$;
		\item $\Phi^{\epsilon}\in L^{\infty}(0,T;H^{1}(\mathbb{T}^{3}));$
		\item $0\leq\rho^{\epsilon}\in L^{\infty}(0,T;L^{1}(\mathbb{T}^{3}))\cap C([0,T];L^{1}(\mathbb{T}^{3}))$;
		\item $ \sqrt{\rho^{\epsilon}}u^{\epsilon}\in L^{\infty}(0,T;L^{2}(\mathbb{T}^{3}));
		u^{\epsilon} \in  L^{2}(0,T;H^{1}(\mathbb{T}^{3}))$; $\rho^{\epsilon} u^{\epsilon}\in C([0,T];L_{w}^{\frac{2\gamma}{\gamma+1}}(\mathbb{T}^{3}))$;
		
		\item For any $\varphi\in C_{c}^{\infty}([0,T]\times\mathbb{T}^{3}\times \mathbb{R}^{3})$ with $\varphi(T,\cdot,\cdot)=0$,  one has
		\begin{align}\label{O-test1}
		\int_{0}^{T}\int_{\mathbb{T}^{3}\times\mathbb{R}^{3}}&f^{\epsilon}\left(\partial_{t}\varphi+\xi\cdot\nabla_{x}\varphi
		+(u^{\epsilon}-\xi-\nabla_{x}\Phi)\cdot\nabla_{\xi}\varphi+\frac{1}{\epsilon}(u^{\epsilon}_{f}-\xi)\cdot\nabla_{\xi}\varphi\right)~d\xi dxds\nonumber\\
		&+\int_{\mathbb{T}^{3}\times\mathbb{R}^{3}}f^{\epsilon}_{0}\varphi(0,x,\xi)~d\xi dx=0.
		\end{align}
		
	\item For any $\phi\in H^{1}(\mathbb{T}^{3})$, one has
	\begin{equation}\label{O-test2}
	\int_{\mathbb{T}^{3}}\nabla_{x}\Phi^{\epsilon}\cdot\nabla_{x}\phi~dx=\int_{\mathbb{T}^{3}}(\rho^{\epsilon}_{f}-1)\phi~dx.
	\end{equation}
	
	\item For any $\psi\in C_{c}^{\infty}([0,T]\times\mathbb{T}^{3})$ with $\psi(T,\cdot)=0$, we obtain
	\begin{align}\label{O-test3}
	&\int_{0}^{T}\int_{\mathbb{T}^{3}}\rho^{\epsilon}(\partial_{t}\psi+u^{\epsilon}\cdot \nabla_{x}\psi)~dxdt+\int_{\mathbb{T}^{3}}\rho^{\epsilon}_{0}\psi(0,x)~dx=0.\\
	\int_{0}^{T}\int_{\mathbb{T}^{3}}& \Bigl\{\rho^{\epsilon} u^{\epsilon}\cdot \partial_{t}\psi+\rho^{\epsilon} u^{\epsilon}\otimes u^{\epsilon}:\nabla \psi+p^{\epsilon}\mathrm{div}_{x}\psi-\nabla u^{\epsilon}:\nabla \psi-\int_{\mathbb{R}^{3}}(u^{\epsilon}-\xi)f^{\epsilon}~d\xi\cdot \psi \Bigr\}~dxdt\nonumber\\
	&+\int_{\mathbb{T}^{3}}\rho^{\epsilon}_{0}u^{\epsilon}_{0}\cdot\psi(0,x)~dx=0.
	\end{align}
		\item The weak solution $\left(f^{\epsilon},\rho^{\epsilon},u^{\epsilon}\right)$ satisfies the  following energy inequality:
	\begin{align}\label{O-energy}
	\int_{\mathbb{T}^{3}}&\left\lbrace \rho^{\epsilon}\frac{|u^{\epsilon}|^{2}}{2}+\frac{1}{\gamma-1}(\rho^{\epsilon})^{\gamma}+\frac{1}{2}|\nabla_{x}\Phi^{\epsilon}|^{2}+\int_{\mathbb{R}^{3}}\frac{|\xi|^{2}}{2}f^{\epsilon}~d\xi\right\rbrace dx\nonumber\\ &+\int_{0}^{t}\int_{\mathbb{T}^{3}}\left\lbrace |\nabla_{x}u^{\epsilon}|^{2}+\int_{\mathbb{R}^{3}}(|u^{\epsilon}-\xi|^{2}+\frac{1}{\epsilon}|u^{\epsilon}_{f}-\xi|^{2})f~d\xi\right\rbrace ~dxds\nonumber\\
	\leq &\int_{\mathbb{T}^{3}}\left\lbrace \rho^{\epsilon}_{0}\frac{|u^{\epsilon}_{0}|^{2}}{2}+\frac{1}{\gamma-1}(\rho^{\epsilon}_{0})^{\gamma}+\frac{1}{2}|\nabla_{x}\Phi^{\epsilon}_{0}|^{2}+\int_{\mathbb{R}^{3}}\frac{|\xi|^{2}}{2}f^{\epsilon}_{0}~d\xi\right\rbrace dx,
	\end{align}
		for any $t\in [0,T].$
	\end{itemize}
\end{defn}

 Before giving the main theorem, we first make the following assumptions about the initial data:
\begin{equation}\label{O-assump-11}
\mathcal{F}(f_{0}^{\epsilon},\rho_{0}^{\epsilon},u_{0}^{\epsilon})<\infty,
\end{equation}
\begin{equation}\label{O-assump-1}
\int_{\mathbb{T}^{3}}\mathcal{H}({U}^{\epsilon}_{0}|U_{0})~dx\leq C\sqrt{\epsilon},
\end{equation}
and
\begin{equation}\label{O-assump-2}
\mathcal{F}(f_{0}^{\epsilon},\rho_{0}^{\epsilon},u_{0}^{\epsilon})-\frac{1}{2}\int_{\mathbb{T}^{3}}|\nabla K*(\rho_{f_{0}}^{\epsilon}-1)|^{2}~dx-\int_{\mathbb{T}^{3}}H(U_{0}^{\epsilon})~dx\leq C\sqrt{\epsilon},
\end{equation}
where $\mathcal{F}(f^{\epsilon},\rho^{\epsilon},u^{\epsilon})$  is defined by \eqref{O-entropy-1} and the relative entropy functional $\mathcal{H}({U}^{\epsilon}|U)$ is given by \eqref{O-relative entropy functional}.

 Now, we are in a position to state our main result.
\begin{thm}\label{O-D THM1}
Let $T>0$ and $\gamma>\frac{3}{2}$,  and let $(f^{\epsilon},\rho^{\epsilon},u^{\epsilon})$ be a global weak
solution to the problem $\eqref{O-eq1.1}-\eqref{O-ini1}$ with the initial data $(f^{\epsilon}_0 , \rho^{\epsilon}_0, u^{\epsilon}_0)$ in the sense of Definition
\ref{O-Def3.1}, and $(\rho,u,\rho_{f},u_{f})$ be a smooth solution of the limit system \eqref{O-limit system} with the smooth initial data
$(\rho_{0},u_{0},\rho_{f_{0}},u_{f_{0}})$, which satisfies the assumption conditions $\eqref{O-assump-11}-\eqref{O-assump-2}$.
Then there exists a positive constant $C$  such that
	\begin{align}\label{O-enery 2}
	\int_{\mathbb{T}^{3}}&\mathcal{H}(U^{\epsilon}|U)~dx+\int_{\mathbb{T}^{3}}|\nabla K*({\rho}^{\epsilon}_{f}-\rho_{f})|^{2}~dx\nonumber\\
	 &+\int_{0}^{T}\int_{\mathbb{T}^{3}}{\rho}^{\epsilon}_{f}|({u}^{\epsilon}_{f}-{u}^{\epsilon})-({u}_{f}-{u})|^{2}~dxds+\int_{0}^{T}\int_{\mathbb{T}^{3}}|\nabla(u-{u}^{\epsilon})|^{2}~dxds\nonumber\\
	\leq& C\sqrt{\epsilon}.
	\end{align}
Besides, the following convergence
	hold:
	$$\rho^{\epsilon}\longrightarrow \rho \ \ \ \mathrm{in} \ \ \  L^{1}_{loc}(0,T;L^{p}(\mathbb{T}^{3})), p\in[1,\gamma];$$
	$$\rho_{f}^{\epsilon}\longrightarrow \rho_{f} \ \ \ \mathrm{in} \ \ \  L^{2}(0,T;H^{-1}(\mathbb{T}^{3}));$$
		$${f}^{\epsilon}\longrightarrow \rho_{f}\otimes\delta_{\xi=u_{f}} \ \ \ \mathrm{in} \ \ \  \mathcal{D}^{'}((0,T)\times\mathbb{T}^{3}\times\mathbb{R}^{3});$$
		$$\rho^{\epsilon}u^{\epsilon}\longrightarrow \rho u \ \ \ \mathrm{in} \ \ \  L^{\infty}(0,T;L^{1}(\mathbb{T}^{3}));$$
	$$\rho_{f}^{\epsilon}u_{f}^{\epsilon}\longrightarrow \rho_{f}u_{f} \ \ \ \mathrm{in} \ \ \  \mathcal{D}^{'}((0,T)\times\mathbb{T}^{3}).$$
\end{thm}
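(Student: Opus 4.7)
The plan is to prove the estimate \eqref{O-enery 2} by a relative entropy / modulated energy argument, from which every convergence listed will follow by standard reasoning. First I will combine the energy inequality \eqref{O-energy} for $(f^\epsilon,\rho^\epsilon,u^\epsilon)$ with the energy identity for the smooth limit $(\rho,u,\rho_f,u_f)$, and use the weak formulations \eqref{O-test1}--\eqref{O-test3} with test functions built from the limit profile---typically $u$, $\tfrac{1}{2}|u|^2$ and $\gamma\rho^{\gamma-1}/(\gamma-1)$ on the fluid side, and $\tfrac{1}{2}|\xi-u|^2$, $\xi\cdot u$ on the kinetic side (after truncation in $\xi$)---to rewrite $\int_{\mathbb{T}^3}\mathcal{H}(U^\epsilon|U)\,dx+\tfrac{1}{2}\int|\nabla K*(\rho_f^\epsilon-\rho_f)|^2\,dx$ as a time integral of controllable error terms plus the good dissipations that appear on the left of \eqref{O-enery 2}.

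The central computation is to identify, in the resulting identity, the dissipative good terms: the viscosity produces $\int|\nabla(u^\epsilon-u)|^2\,dx$, while the drag $\int|u^\epsilon-\xi|^2 f^\epsilon\,d\xi$ together with the alignment $\tfrac{1}{\epsilon}\int|u_f^\epsilon-\xi|^2 f^\epsilon\,d\xi$, after subtracting the analogous contributions from the limit system, rearranges into $\int\rho_f^\epsilon|(u_f^\epsilon-u^\epsilon)-(u_f-u)|^2\,dx$ plus the $1/\epsilon$-weighted control of the kinetic temperature $\int|\xi-u_f^\epsilon|^2 f^\epsilon\,d\xi$. The remaining cross terms split into (a) terms quadratic in the modulated variables that are absorbed into $\int\mathcal{H}(U^\epsilon|U)\,dx$ using smoothness of $(\rho,u,\rho_f,u_f)$, and (b) source terms of order $\sqrt{\epsilon}$ coming from the ``pressure-tensor defect'' $\int(\xi-u_f)\otimes(\xi-u_f) f^\epsilon\,d\xi$ and the alignment defect $\rho_f^\epsilon(u_f^\epsilon-u_f)$, both of which the $1/\epsilon$ dissipation controls after an application of Cauchy--Schwarz.

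The hardest part will be the Poisson coupling. I will use the decomposition $K(x)=c_1|x|^{-1}+G_1(x)$ stated in the introduction together with the splitting $\nabla K*(\rho_f^\epsilon-1)=\nabla K*(\rho_f^\epsilon-\rho_f)+\nabla K*(\rho_f-1)$; the second piece is a smooth prescribed coefficient and is treated by Gronwall, while the first piece must be matched with the evolution of the modulated potential energy $\tfrac{1}{2}|\nabla K*(\rho_f^\epsilon-\rho_f)|^2$. To this end I would take the $\xi$-moment of the Vlasov equation to derive a continuity equation for $\rho_f^\epsilon$ and subtract its limit analogue, yielding a flux of the form $\rho_f^\epsilon u_f^\epsilon-\rho_f u_f$; pairing this with $\nabla K*(\rho_f^\epsilon-\rho_f)$ and integrating by parts against $\Delta^{-1}$ gives precisely the time derivative of the modulated potential energy plus terms bounded by $\|\rho_f^\epsilon-\rho_f\|_{H^{-1}}^2\lesssim\int|\nabla K*(\rho_f^\epsilon-\rho_f)|^2\,dx$ and by the kinetic/drag dissipations. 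Collecting everything yields a Gronwall inequality of the form $\tfrac{d}{dt}\bigl(\int\mathcal{H}(U^\epsilon|U)\,dx+\int|\nabla K*(\rho_f^\epsilon-\rho_f)|^2\,dx\bigr)+(\text{dissipations})\le C\bigl(\int\mathcal{H}(U^\epsilon|U)\,dx+\int|\nabla K*(\rho_f^\epsilon-\rho_f)|^2\,dx\bigr)+C\sqrt{\epsilon}$, which together with the initial assumptions \eqref{O-assump-1}--\eqref{O-assump-2} gives \eqref{O-enery 2}.

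Finally, the listed convergences follow from \eqref{O-enery 2} by standard routes. Control of the pressure part $P(\rho^\epsilon|\rho)\subset\mathcal{H}$ gives $\rho^\epsilon\to\rho$ in $L^1_{\mathrm{loc}}(0,T;L^p(\mathbb{T}^3))$ for $p\in[1,\gamma]$; the potential bound gives $\rho_f^\epsilon\to\rho_f$ in $L^2(0,T;H^{-1}(\mathbb{T}^3))$; the decomposition $\rho^\epsilon u^\epsilon-\rho u=\sqrt{\rho^\epsilon}\,(\sqrt{\rho^\epsilon}(u^\epsilon-u))+(\rho^\epsilon-\rho)u$ with Cauchy--Schwarz against the modulated fluid energy yields the momentum convergence; and for the Dirac concentration I test $f^\epsilon$ against a smooth $\varphi(t,x,\xi)$, Taylor-expand in $\xi$ around $u_f$, and control the remainder by the kinetic part $\int|\xi-u_f|^2 f^\epsilon\,d\xi$ of $\mathcal{H}$, which is $O(\sqrt{\epsilon})$, giving $f^\epsilon\to\rho_f\otimes\delta_{\xi=u_f}$ in $\mathcal{D}'$ and, as a consequence, $\rho_f^\epsilon u_f^\epsilon\to\rho_f u_f$ in $\mathcal{D}'$.
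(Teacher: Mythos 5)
Your proposal follows essentially the same route as the paper: a relative entropy (modulated energy) argument augmented by the modulated Coulomb energy $\tfrac{1}{2}\int_{\mathbb{T}^3}|\nabla K*(\rho_f^\epsilon-\rho_f)|^2\,dx$, with the kinetic pressure defect $\int(\xi\otimes\xi-u_f^\epsilon\otimes u_f^\epsilon)f^\epsilon\,d\xi$ controlled at order $\sqrt{\epsilon}$ by the $\epsilon^{-1}$ alignment dissipation, and a final Gr\"onwall closure. One step in your plan is under-specified, and it is precisely the crux of the paper: the drag cross term $\int_0^t\int_{\mathbb{T}^3}(\rho_f^\epsilon-\rho_f)(u_f-u)\cdot(u^\epsilon-u)\,dx\,ds$ (the paper's $\mathcal{J}_5^2$) falls into neither of your categories (a) nor (b), because here $\mathcal{H}(U^\epsilon|U)$ contains no term of the form $\bar{P}(\rho_f^\epsilon|\rho_f)$ (there is no Fokker--Planck diffusion in the kinetic equation), so $\rho_f^\epsilon-\rho_f$ is not controlled by the relative entropy in any Lebesgue norm. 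The paper resolves it with exactly the mechanism you invoke for the potential-energy evolution: writing $\rho_f^\epsilon-\rho_f=-\Delta K*(\rho_f^\epsilon-\rho_f)$ in the weak sense, integrating by parts onto $(u_f-u)\cdot(u^\epsilon-u)$, and bounding the result by $\|\nabla K*(\rho_f^\epsilon-\rho_f)\|_{L^2}^2$ plus a small multiple of the viscous dissipation plus $\int_{\mathbb{T}^3}\mathcal{H}(U^\epsilon|U)\,dx$, where the last conversion uses the Poincar\'e-type inequality of Lemma \ref{O-lem-ineq} (needed because $\rho^\epsilon$ may vanish, so $\|u^\epsilon-u\|_{L^2}$ is not directly dominated by $\mathcal{H}$; the same device is needed to absorb your category (a) terms when $\gamma\leq 2$). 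With that term routed through the Coulomb energy, your Gr\"onwall inequality and the subsequent convergence arguments match the paper's.
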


\begin{rem}
The constant $C>0$ depends on $\|\rho_{f}\|_{L_{t,x}^{\infty}}$, $\rho_{*}(:=\inf\limits_{(t,x)\in[0,T]\times\mathbb{T}^{3}} \rho(t,x)>0)$, $\bar{C}$, $\gamma$, $\|\rho\|_{L_{t,x}^{\infty}}$, $\|u_{f}-u\|_{L_{t,x}^{\infty}}$, $\|\nabla(u_{f}-u)\|_{L_{t,x}^{\infty}}$, $\|\Delta u\|_{L_{t,x}^{\infty}}$,  $\|\nabla u_{f}\|_{L_{t,x}^{\infty}}$, where $\bar{C}$ is defined in Lemma \ref{O-lem-ineq}.
\end{rem}
\begin{rem}
	This result also holds for compressible Navier-Stokes-Vlasov-Poisson equations without local alignment force. But we can not obtain the hydrodynamic limit of weak solutions to the  3D compressible Navier-Stokes-Vlasov equations, because of without  good convergence of $\rho_{f}^{\epsilon}$ and  estimate of  $\mathcal{J}_{5}^{2}$ (see \eqref{O-J52})when the particles are not charged to the fluid-particle model. At present, we only obtain  the hydrodynamic limit of weak solutions to  the  one-dimensional compressible Navier-Stokes-Vlasov system with the local alignment force, see \cite{Shi2023}.  We will consider the hydrodynamic limit of weak solutions to the  multidimensional compressible Navier-Stokes-Vlasov equations in the future.
\end{rem}
\begin{rem}
The global weak solutions for the compressible Navier-Stokes-Vlasov-Poisson equations with local alignment force  is reported in our preprint paper  \cite{Su2025}.
\end{rem}

Now, let us sketch the strategy of proving Theorem \ref{O-D THM1} and explain some main difficulties and techniques  arguments involved in the process.
As  mentioned in the introduction, Choi and Jung \cite{Choi2021} proved  the global weak solutions of the compressible Navier-Stokes-Vlasov-Fokker-Planck equations with local alignment force converge to the smooth solutions of the limiting two-phase fluid model,
they defined the relative entropy functional as follows:
$$\bar{\mathcal{H}}(U^{\epsilon}|U)=\frac{1}{2}{\rho}^{\epsilon}|{u}^{\epsilon}-u|^{2}+\frac{1}{2}{\rho}^{\epsilon}_{f}|{u}^{\epsilon}_{f}-u_{f}|^{2}+P({\rho}^{\epsilon}|\rho)+\bar{P}({\rho^{\epsilon}_{f}}|\rho_{f}),$$
where
$$P({\rho}^{\epsilon}|\rho)=\frac{1}{\gamma-1}(({\rho}^{\epsilon})^{\gamma}-{\rho}^{\gamma})+\frac{\gamma}{\gamma-1}(\rho-{\rho}^{\epsilon})\rho^{\gamma-1},$$
and
$$\bar{P}({\rho^{\epsilon}_{f}}|\rho_{f})=\rho^{\epsilon}_{f}\log \rho^{\epsilon}_{f}-\rho_{f}\log \rho_{f}+(\rho_{f}-\rho^{\epsilon}_{f})(1+\log \rho_{f}).$$ But when there is no the diffusion term $\Delta f^{\epsilon}$ in the kinetic equation,  the relative entropy functional does not include $\bar{P}(\rho_{f}^{\epsilon}|\rho_{f})$. The difficulty  encountered  is that we can not obtain the convergence of $\rho_{f}^{\epsilon}$ from the relative entropy in our analysis. And then we can not deal with the term  $\int_{0}^{t}\int_{\mathbb{T}^{3}}(\rho_{f}^{\epsilon }-\rho_{f})(u_{f}-u)\cdot({u}^{\epsilon}-u)~dxds$. Han-Kwan and Michel \cite{Han-Kwan2024} established
the hydrodynamic limits for the incompressible Navier-Stokes-Vlasov equations
on $\mathbb{T}^{3}$. And later, based on some ideas in  \cite{Han-Kwan2024},
Su-Wu-Yao-Zhang \cite{Su2023} established the hydrodynamic limit of the inhomogeneous incompressible Navier-Stokes-Vlasov equations when the initial density is bounded away from zero. Precisely,
assume {\it a priori} $\|\nabla u^\epsilon\|_{L^1(0,T;L^\infty(\mathbb{T}^3))}\leq \delta $, which implies $\|\rho_{f}^\epsilon\|_{L^\infty((0,T)\times\mathbb{T}^3)}\leq C$, the authors obtained
\begin{align}\label{O-est-eq47-n}
&\|\rho_{f}^{\epsilon}-\rho_{f}\|_{\dot{H}^{-1}(\mathbb{T}^3)}\leq \|\rho^{\epsilon}_{f_{0}}-\rho_{f_{0}}\|_{\dot{H}^{-1}(\mathbb{T}^3)}+\nonumber\\
&\epsilon\int_{0}^{t}\|F^\epsilon(s)\|_{L^2(\mathbb{T}^3)} ds+ C(\|\rho_{f}^\epsilon\|_{L^{\infty}((0,T)\times\mathbb{T}^{3})})\int_{0}^{t}\|u^{\epsilon}-u\|_{L^{2}(\mathbb{T}^3)}~ds,
\end{align}
then they can estimate
$\int_{0}^{t}\int_{\mathbb{T}^{3}}(\rho_{f}^{\epsilon}-\rho_{f})(u^{\epsilon}-u)\cdot G~dxds$ in \cite{Su2023}, here $F^\epsilon=\frac{1}{\epsilon}\int_{\mathbb{R}^3}(\xi-u^\epsilon)f^\epsilon d\xi$, and $G$ is smooth related to the  physical quantity of  limit equations.  At last, they used maximal parabolic regularity estimates for the incompressible Navier-Stokes equations and regularity estimate for the Stokes equations to close the  {\it a priori}  assumption $\|\nabla u^\epsilon\|_{L^1(0,T;L^\infty(\mathbb{T}^3))}\leq \delta$. But this method does not apply for the compressible case.
Recently, Choi-Jung \cite{Choi2023} considered the hydrodynamic limit of the Vlasov-Poisson or Vlasov-Poisson-Fokker-Planck equation coupled with the incompressible Navier-Stokes equations,
 it is surprising that the Coulomb interaction can be used to control $\rho_{f}^{\epsilon}-\rho_{f}$,  due to $-\Delta K*(\rho_{f}^{\epsilon}-\rho_{f})=\rho_{f}^{\epsilon}-\rho_{f}$ in the weak sense.
 Motivated by this,  we can estimate  $\int_{0}^{t}\int_{\mathbb{T}^{3}}(\rho_{f}^{\epsilon }-\rho_{f})(u_{f}-u)\cdot({u}^{\epsilon}-u)~dxds$.
Thus this paper considers the hydrodynamic limit of weak solutions to the compressible Navier-Stokes-Vlasov-Poisson equations with the local alignment force.
We firstly obtain the relative entropy inequality in
Lemma \ref{O-relative ineq} to the problem  $\eqref{O-eq1.1}-\eqref{O-ini1}$, and then show the relative entropy
estimates (see Section \ref{O-Section2-3}), and finally pass to the limit of $\epsilon\rightarrow 0$. In the process of estimating the relative entropy
inequality, $\mathcal{J}_1$ can be solved by assumption \eqref{O-assump-1}. $\mathcal{J}_2$ and $\mathcal{J}_3$ can be estimated by $\mathcal{O}(\sqrt{\epsilon})$, where we used the fact that $\rho^{\epsilon}_{f}|u^{\epsilon}_{f}|^{2}\leq\int_{\mathbb{R}^{3}}|\xi|^{2}f^{\epsilon}~d\xi$, assumption \eqref{O-assump-2} and Lemma
 \ref{O-modified entropy estimate}. $\mathcal{J}_4$ and $\mathcal{J}_6$ can be controlled by $C\int_{\mathbb{T}^{3}}\mathcal{H}(U^{\epsilon}|U)~dx$. For $\mathcal{J}_5$,
  due to $-\Delta K*(\rho_{f}^{\epsilon}-\rho_{f})=\rho_{f}^{\epsilon}-\rho_{f}$ in the weak sense and the Poincar\'{e} type inequality \eqref{O-lem poincare}, we can estimate
 \begin{align*}
 \mathcal{J}_{5}^{2}=\int_{0}^{t}\int_{\mathbb{T}^{3}}(\rho_{f}^{\epsilon }-\rho_{f})(u_{f}-u)\cdot({u}^{\epsilon}-u)~dxds.
 \end{align*}
 For $\mathcal{J}_7$, we use the symmetry of $K$ to obtain
 \begin{align*}
 \mathcal{J}_7+\frac{1}{2}\int_{\mathbb{T}^{3}}|\nabla K*(\rho_{f}^{\epsilon}-\rho_{f})|^{2}~dx\leq \frac{1}{2}\int_{\mathbb{T}^{3}}|\nabla K*(\rho_{f_{0}}^{\epsilon}-\rho_{f_{0}})|^{2}~dx+C\int_{0}^{t}\int_{\mathbb{T}^{3}}|\nabla K*(\rho_{f}^{\epsilon}-\rho_{f})|^{2}~dxds.
 \end{align*}
Finally, we use the  Gr\"{o}nwall lemma to get the desired relative entropy inequality \eqref{O-enery 2}.

\bigbreak
The rest of this paper is organized as follows. In Section \ref{O-Section-1}, we give the formal derivation of the asymptotic system.  In Section  \ref{O-Section2}, we give the proof of Theorem \ref{O-D THM1}.
In subsection \ref{O-Section 2-2}, we give the relative entropy inequality of the problem \eqref{O-eq1.1}-\eqref{O-ini1};
Subsection \ref{O-Section2-3} is devoted to obtain the relative entropy estimate in order to proof Theorem  \ref{O-D THM1}. In
subsection \ref{O-Section2-4}, by passing to the limit as $\epsilon\rightarrow 0$, we complete the proof of Theorem \ref{O-D THM1}.

\section{Formal derivation of the asymptotic model}\label{O-Section-1}
Denote
\begin{equation*}
\rho_{f}^{\epsilon}(t,x)=\int_{\mathbb{R}^{3}} f^{\epsilon}(t,x,\xi)~d\xi, ~~~~~~u_{f}^{\epsilon}(t,x)=\frac{1}{\rho_{f}^{\epsilon}(t,x)}\int_{\mathbb{R}^{3}} \xi f^{\epsilon}(t,x,\xi)~d\xi.
\end{equation*}
Firstly, integrating $\eqref{O-eq1.1}_{1}$ with respect to $\xi$, yields
\begin{equation*}
\partial_t{\rho_{f}^{\epsilon}}+\mathrm{div}_{x}{(\rho_{f}^{\epsilon}u_{f}^{\epsilon})}=0.
\end{equation*}
Next, multiplying $\eqref{O-eq1.1}_{1}$ by $\xi$ and integrating with respect to $\xi$, yields
\begin{equation*}
\partial_{t}(\rho_{f}^{\epsilon}u_{f}^{\epsilon})+\mathrm{div}_{x}(\rho_{f}^{\epsilon}u_{f}^{\epsilon}\otimes u_{f}^{\epsilon})+\mathrm{div}_{x}\left(\int_{\mathbb{R}^{3}}(\xi-u_{f}^{\epsilon})\otimes (\xi-u_{f}^{\epsilon})f^{\epsilon}~d\xi\right)=\rho_{f}^{\epsilon}(u^{\epsilon}-u_{f}^{\epsilon})-\rho_{f}^{\epsilon}\nabla_{x}\Phi^{\epsilon},
\end{equation*}
then integrating the third term on left-hand side of the above equation with respect to $x$ and $t$, yields
\begin{align}\label{O-edu-eq1.5}
\int_{0}^{T}\int_{\mathbb{T}^{3}\times\mathbb{R}^{3}}(\xi-u_{f}^{\epsilon})\otimes (\xi-u_{f}^{\epsilon})f^{\epsilon}~d\xi dxds
& \leq \left(\int_{0}^{T}\int_{\mathbb{T}^{3}\times\mathbb{R}^{3}}|\xi-u_{f}^{\epsilon}|^{2}f^{\epsilon}~d\xi dxds\right)\nonumber\\
&\leq C\epsilon,
\end{align}
where we have used the fact that
\begin{equation*}
\int_{0}^{T}\int_{\mathbb{T}^{3}\times\mathbb{R}^{3}}|\xi-u_{f}^{\epsilon}|^{2}f^{\epsilon}~d\xi dxds\leq C\epsilon,
\end{equation*}
which can be obtained by the energy inequality \eqref{O-energy}.

As a result, as $\epsilon\rightarrow0$,  \eqref{O-edu-eq1.5} converges to 0. We assume that $\rho_{f}^{\epsilon}$, $u_{f}^{\epsilon}$ converges to $\rho_{f}$, $u_{f}$ and  $\rho^{\epsilon}$, $u^{\epsilon}$ converges to $\rho$, $u$  respectively. Then we can pass to the limits of the nonlinear terms $\rho_{f}^{\epsilon}u_{f}^{\epsilon}$ and $\rho_{f}^{\epsilon} u_{f}^{\epsilon}\otimes u_{f}^{\epsilon}$, and obtain the corresponding limits $\rho_{f} u_{f}$ and $\rho_{f} u_{f}\otimes u_{f}$. And the nonlinear terms $\rho^{\epsilon}u^{\epsilon}$ and $\rho^{\epsilon} u^{\epsilon}\otimes u^{\epsilon}$ converges to the corresponding limits $\rho u$ and $\rho u\otimes u$, respectively.
So, formally, we have the following system:
\begin{equation}\label{O-limit system}
\left\{
\begin{array}{l}
\partial_t{\rho_{f}}+\mathrm{div}_{x}{(\rho_{f}u_{f})}=0,\\
\partial_{t}\rho+\mathrm{div}_{x}(\rho u)=0,\\
-\Delta_{x}\Phi=\rho_{f}-1,\\
\partial_{t}(\rho_{f}u_{f})+\mathrm{div}_{x}(\rho_{f}u_{f}\otimes u_{f})=\rho_{f}(u-u_{f})-\rho_{f}\nabla_{x}\Phi,\\
\partial_{t}(\rho u)+\mathrm{div}_{x}(\rho u\otimes u)+\nabla_{x}p-\Delta_{x} u=-\rho_{f}(u-u_{f}).
\end{array}
\right.
\end{equation}
And furthermore the limit system \eqref{O-limit system} can be rewritten as follows:
\begin{equation}\label{O-MODI-limitsystem}
\left\{
\begin{array}{l}
\partial_t{\rho_{f}}+\mathrm{div}_{x}{(\rho_{f}u_{f})}=0,\\
\partial_{t}(\rho_{f}u_{f})+\mathrm{div}_{x}(\rho_{f}u_{f}\otimes u_{f})=\rho_{f}(u-u_{f})-\rho_{f}\nabla_{x}K*(\rho_{f}-1),\\
\partial_{t}\rho+\mathrm{div}_{x}(\rho u)=0,\\
\partial_{t}(\rho u)+\mathrm{div}_{x}(\rho u\otimes u)+\nabla_{x}p-\Delta_{x} u=-\rho_{f}(u-u_{f}).
\end{array}
\right.
\end{equation}
\section{Proof of Theorem \ref{O-D THM1}}\label{O-Section2}
\subsection{Relative entropy inequality}\label{O-Section 2-2}
In this subsection, we deduce the relative entropy inequality.
Setting
\begin{equation}\label{O-entropy-1}
\mathcal{F}(f,\rho,u):=\int_{\mathbb{T}^{3}}\left\lbrace  \frac{1}{2}\rho|u|^{2}+\frac{1}{\gamma-1}\rho^{\gamma}+\frac{1}{2}|\nabla_{x}K*(\rho_{f}-1)|^{2}+\int_{\mathbb{R}^{3}}\frac{1}{2}|\xi|^{2}f~d\xi\right\rbrace dx,
\end{equation}
\begin{equation}\label{O-entropy-diffu-2}
\mathcal{D}_{1}(f):=\int_{\mathbb{T}^{3}\times\mathbb{R}^{3}}|u_{f}-\xi|^{2}f~dxd\xi,
\end{equation}
\begin{equation}\label{O-entropy-diffu-3}
\mathcal{D}_{2}(f,u):=\int_{\mathbb{T}^{3}\times\mathbb{R}^{3}}|u-\xi|^{2}f~dxd\xi+\int_{\mathbb{T}^{3}}|\nabla_{x}u|^{2}~dx.
\end{equation}

Now we give the entropy inequality to the problem \eqref{O-eq1.1}-\eqref{O-ini1}.
\begin{lem}\label{O-entropy estimate}
For $T>0$, suppose that $(f^{\epsilon},\rho^{\epsilon},u^{\epsilon})$ is a weak solution to the problem \eqref{O-eq1.1}-\eqref{O-ini1} on the interval $[0,T)$ with the initial data $(f_{0}^{\epsilon},\rho_{0}^{\epsilon},u_{0}^{\epsilon})$ in the sense of Definition \ref{O-Def3.1}. Then we have
	\begin{align*}
 \mathcal{F}(f^{\epsilon},\rho^{\epsilon},u^{\epsilon})(t)+\frac{1}{\epsilon}\int_{0}^{t}\mathcal{D}_{1}(f^{\epsilon})(s)~ds+\int_{0}^{t}\mathcal{D}_{2}(f^{\epsilon},u^{\epsilon})(s)~ds\leq \mathcal{F}(f_{0}^{\epsilon},\rho_{0}^{\epsilon},u_{0}^{\epsilon}).
	\end{align*}
\end{lem}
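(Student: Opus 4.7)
The plan is to view the inequality as a direct consequence of the energy inequality \eqref{O-energy} built into Definition \ref{O-Def3.1}, combined with the kernel representation of the Poisson potential introduced in the preliminaries. Since $\Phi^{\epsilon}(t,\cdot)$ is by \eqref{O-test2} the unique $H^{1}(\mathbb{T}^{3})$ weak solution of $-\Delta_{x}\Phi=\rho_{f}^{\epsilon}-1$, and the mean-zero condition $\int_{\mathbb{T}^{3}}(\rho_{f}^{\epsilon}-1)\,dx=0$ is preserved in time (which follows at once from $\partial_{t}\rho_{f}^{\epsilon}+\mathrm{div}_{x}(\rho_{f}^{\epsilon}u_{f}^{\epsilon})=0$), properties (1)-(2) of $K$ give $\Phi^{\epsilon}=K*(\rho_{f}^{\epsilon}-1)$ for a.e.\ $t\in(0,T)$. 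Consequently $\tfrac{1}{2}|\nabla_{x}\Phi^{\epsilon}|^{2}=\tfrac{1}{2}|\nabla_{x}K*(\rho_{f}^{\epsilon}-1)|^{2}$, and matching term by term the four contributions of the conserved quantity and the three dissipation contributions in \eqref{O-energy} with the definitions \eqref{O-entropy-1}, \eqref{O-entropy-diffu-2}, \eqref{O-entropy-diffu-3} reproduces exactly the asserted inequality.

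For the reader's intuition one may check the same identity formally on smooth solutions. Multiplying the Vlasov equation by $\tfrac{1}{2}|\xi|^{2}$ and the momentum equation by $u^{\epsilon}$, then integrating in $(x,\xi)$ and using the continuity equation, produces the evolution of the quantity $\tfrac{1}{2}\rho^{\epsilon}|u^{\epsilon}|^{2}+\tfrac{(\rho^{\epsilon})^{\gamma}}{\gamma-1}+\int\tfrac{|\xi|^{2}}{2}f^{\epsilon}\,d\xi$ together with the dissipations $|\nabla u^{\epsilon}|^{2}$, $\int|u^{\epsilon}-\xi|^{2}f^{\epsilon}\,d\xi$ and $\tfrac{1}{\epsilon}\int|u_{f}^{\epsilon}-\xi|^{2}f^{\epsilon}\,d\xi$; the last arises from the alignment term after noting $\int(u_{f}^{\epsilon}-\xi)f^{\epsilon}\,d\xi=0$. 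The electric force contribution $-\int\rho_{f}^{\epsilon}u_{f}^{\epsilon}\cdot\nabla_{x}\Phi^{\epsilon}\,dx$ from the Vlasov side then cancels against $\frac{d}{dt}\tfrac{1}{2}\int|\nabla_{x}\Phi^{\epsilon}|^{2}\,dx$, which, by differentiating the Poisson equation in time and integrating by parts, equals $\int\rho_{f}^{\epsilon}u_{f}^{\epsilon}\cdot\nabla_{x}\Phi^{\epsilon}\,dx$. This cancellation is precisely what accounts for the appearance of the potential energy $\tfrac{1}{2}|\nabla_{x}K*(\rho_{f}^{\epsilon}-1)|^{2}$ inside $\mathcal{F}$.

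There is no substantive obstacle at this level of the argument, since the energy inequality \eqref{O-energy} is assumed as part of the definition of weak solution; the only non-mechanical step is the kernel identification $\Phi^{\epsilon}=K*(\rho_{f}^{\epsilon}-1)$, which is immediate from the uniqueness clause of property (2) of $K$. Had one instead needed to establish \eqref{O-energy} from scratch at the weak-solution level, the main difficulty would lie in justifying the temporal differentiation of $\tfrac{1}{2}|\nabla_{x}\Phi^{\epsilon}|^{2}$ for $\rho_{f}^{\epsilon}$ of limited regularity, typically handled through a mollification or Galerkin approximation scheme together with lower-semicontinuity of the dissipation terms.
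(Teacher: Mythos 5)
Your proposal is correct and is essentially the argument the paper intends: the lemma is exactly the energy inequality \eqref{O-energy} from Definition \ref{O-Def3.1}, rewritten via the identification $\Phi^{\epsilon}=K*(\rho^{\epsilon}_{f}-1)$ (valid since the mean-zero condition on $\rho^{\epsilon}_{f}-1$ is propagated by the continuity equation for $\rho_f^\epsilon$), and the paper simply omits this by citing \cite{Choi2021}. Your formal derivation of the cancellation between the electric-force term and $\frac{d}{dt}\frac{1}{2}\int|\nabla_x\Phi^\epsilon|^2\,dx$ is also consistent with the structure of \eqref{O-energy}.
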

\begin{proof}
	For the proof, we refer to Ref.\cite{Choi2021}, here we omit it.
\end{proof}
Next, we give a modified entropy inequality for later use.
\begin{lem}\label{O-modified entropy estimate}
For $T>0$, suppose that $(f^{\epsilon},\rho^{\epsilon},u^{\epsilon})$ is a weak solution to the problem \eqref{O-eq1.1}-\eqref{O-ini1} on the interval $[0,T)$ with the initial data $(f_{0}^{\epsilon},\rho_{0}^{\epsilon},u_{0}^{\epsilon})$ in the sense of Definition \ref{O-Def3.1}. Then we have	
	\begin{align*}
	 \mathcal{F}(f^{\epsilon},\rho^{\epsilon},u^{\epsilon})(t)&+\frac{1}{2\epsilon}\int_{0}^{t}\mathcal{D}_{1}(f^{\epsilon})(s)~ds+\int_{0}^{t}\int_{\mathbb{T}^{3}}\rho_{f}^{\epsilon}|u_{f}^{\epsilon}-u|^{2}~dxds+\int_{0}^{t}\int_{\mathbb{T}^{3}}|\nabla_{x}u^{\epsilon}|^{2}~dxds\\
	&\leq \mathcal{F}(f_{0}^{\epsilon},\rho_{0}^{\epsilon},u_{0}^{\epsilon})+C(T)\epsilon.
	\end{align*}
\end{lem}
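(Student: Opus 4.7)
The plan is to deduce this refined inequality from Lemma \ref{O-entropy estimate} by a direct algebraic reorganization of $\mathcal{D}_2(f^\epsilon, u^\epsilon)$, and then to replace $u^\epsilon$ by the smooth limit $u$ in the alignment term via a Young-type inequality, absorbing the resulting discrepancy into the $C(T)\epsilon$ remainder. The starting point is that $u_f^\epsilon$ is defined so that $\int_{\mathbb{R}^3}(\xi - u_f^\epsilon)f^\epsilon \, d\xi = 0$. Consequently, expanding
$$|u^\epsilon - \xi|^2 = |u^\epsilon - u_f^\epsilon|^2 + 2(u^\epsilon - u_f^\epsilon)\cdot(u_f^\epsilon - \xi) + |u_f^\epsilon - \xi|^2$$
and integrating against $f^\epsilon\, d\xi$ annihilates the cross term, yielding the pointwise-in-$x$ identity
$$\int_{\mathbb{R}^3}|u^\epsilon - \xi|^2 f^\epsilon \, d\xi = \int_{\mathbb{R}^3}|u_f^\epsilon - \xi|^2 f^\epsilon \, d\xi + \rho_f^\epsilon |u^\epsilon - u_f^\epsilon|^2.$$
Integrating in $x$ and recalling the definitions \eqref{O-entropy-diffu-2}--\eqref{O-entropy-diffu-3} then gives the decomposition
$$\mathcal{D}_2(f^\epsilon, u^\epsilon) = \mathcal{D}_1(f^\epsilon) + \int_{\mathbb{T}^3}\rho_f^\epsilon |u^\epsilon - u_f^\epsilon|^2 \, dx + \int_{\mathbb{T}^3}|\nabla_x u^\epsilon|^2 \, dx.$$

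Substituting this identity into Lemma \ref{O-entropy estimate}, the kinetic-dissipation coefficient becomes $\frac{1}{\epsilon}+1 \geq \frac{1}{2\epsilon}$, so I discard the non-negative excess and retain $\frac{1}{2\epsilon}\int_0^t \mathcal{D}_1(f^\epsilon)\,ds$ on the left together with the new velocity-alignment term and the viscous term. To convert $|u_f^\epsilon - u^\epsilon|^2$ into $|u_f^\epsilon - u|^2$ with $u$ the smooth limit solution, I apply
$$|u_f^\epsilon - u^\epsilon|^2 \geq \tfrac{1}{2}|u_f^\epsilon - u|^2 - |u - u^\epsilon|^2,$$
which produces $\tfrac{1}{2}\rho_f^\epsilon|u_f^\epsilon - u|^2$ on the left at the price of an error $\int_0^t \int_{\mathbb{T}^3} \rho_f^\epsilon |u - u^\epsilon|^2 \, dx\, ds$. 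I absorb this error into $C(T)\epsilon$ using the uniform $L^\infty$ regularity of the smooth limit $u$, the conserved mass $\|\rho_f^\epsilon\|_{L^1}=\|f_0^\epsilon\|_{L^1}$, the basic energy bound on $u^\epsilon$ from Lemma \ref{O-entropy estimate}, and the initial-data assumption \eqref{O-assump-2}.

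The main obstacle is this last replacement step: producing a \emph{quantitatively} $\mathcal{O}(\epsilon)$ bound on $\int_0^t\int_{\mathbb{T}^3}\rho_f^\epsilon |u - u^\epsilon|^2 \, dx\, ds$ \emph{independently} of the full relative-entropy estimate, since otherwise the argument becomes circular. The algebraic decomposition in the first paragraph is routine; the delicate ingredient is the interplay between the retained kinetic dissipation $\frac{1}{2\epsilon}\mathcal{D}_1$, the $\mathcal{O}(\sqrt{\epsilon})$ initial discrepancy coming from \eqref{O-assump-2}, and a short Gr\"{o}nwall propagation that converts them into the claimed $C(T)\epsilon$ remainder.
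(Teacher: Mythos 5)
The first half of your argument is correct and is the standard route to this lemma: since $\int_{\mathbb{R}^{3}}(\xi-u_{f}^{\epsilon})f^{\epsilon}\,d\xi=0$, the cross term vanishes and
\[
\mathcal{D}_{2}(f^{\epsilon},u^{\epsilon})=\mathcal{D}_{1}(f^{\epsilon})+\int_{\mathbb{T}^{3}}\rho_{f}^{\epsilon}|u^{\epsilon}-u_{f}^{\epsilon}|^{2}\,dx+\int_{\mathbb{T}^{3}}|\nabla_{x}u^{\epsilon}|^{2}\,dx,
\]
so Lemma \ref{O-entropy estimate} immediately yields
\[
\mathcal{F}(f^{\epsilon},\rho^{\epsilon},u^{\epsilon})(t)+\frac{1}{2\epsilon}\int_{0}^{t}\mathcal{D}_{1}(f^{\epsilon})\,ds+\int_{0}^{t}\int_{\mathbb{T}^{3}}\rho_{f}^{\epsilon}|u_{f}^{\epsilon}-u^{\epsilon}|^{2}\,dx\,ds+\int_{0}^{t}\int_{\mathbb{T}^{3}}|\nabla_{x}u^{\epsilon}|^{2}\,dx\,ds\le \mathcal{F}(f_{0}^{\epsilon},\rho_{0}^{\epsilon},u_{0}^{\epsilon}),
\]
with no $C(T)\epsilon$ error at all. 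The gap is exactly where you yourself locate it: the replacement of $|u_{f}^{\epsilon}-u^{\epsilon}|^{2}$ by $|u_{f}^{\epsilon}-u|^{2}$ with $u$ the smooth limit velocity. The discrepancy $\int_{0}^{t}\int_{\mathbb{T}^{3}}\rho_{f}^{\epsilon}|u-u^{\epsilon}|^{2}\,dx\,ds$ is \emph{not} $O(\epsilon)$: the ingredients you invoke (boundedness of $u$, conservation of $\|\rho_{f}^{\epsilon}\|_{L^{1}}$, the energy bound on $u^{\epsilon}$, and \eqref{O-assump-2}) only give an $O(1)$ bound, e.g.\ via $\rho_{f}^{\epsilon}\in L^{\infty}_{t}L^{5/3}_{x}$ paired with $u^{\epsilon}-u\in L^{2}_{t}L^{6}_{x}$, and any quantitative smallness of $u^{\epsilon}-u$ is precisely the output of the relative-entropy argument that this lemma is supposed to feed, so invoking it here is circular. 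As written, the proposal does not prove the inequality with the limit velocity $u$.

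That said, the difficulty appears to originate in the statement rather than in your strategy. The modified entropy inequality proved in the cited reference \cite{Choi2021} (and in \cite{Carrillo2016}) carries the fluid velocity $u^{\epsilon}$ of the $\epsilon$-system, not the limit velocity, in the alignment term; that is exactly the version your first paragraph establishes. Moreover, testing the identity claimed for $D^{2}H(U)F(U)(\bar{U}-U)+DH(U)F(\bar{U})$ in the proof of Lemma \ref{O-relative ineq} on constant data (all densities equal to $1$, $K\equiv 0$, $u=2$, $u_{f}=1$, $\bar{u}=3$, $\bar{u}_{f}=5$, where the left-hand side equals $5$) shows that the term $-\bar{\rho}_{f}|\bar{u}_{f}-u|^{2}$ there should read $-\bar{\rho}_{f}|\bar{u}_{f}-\bar{u}|^{2}$; with that correction the corresponding term in $\mathcal{J}_{2}$ becomes $\int_{0}^{t}\int_{\mathbb{T}^{3}}\rho_{f}^{\epsilon}|u_{f}^{\epsilon}-u^{\epsilon}|^{2}\,dx\,ds$, and your first paragraph already supplies a complete proof of everything the subsequent argument needs. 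If the lemma is to be kept with the limit velocity $u$, a genuinely different argument (not a Young-inequality perturbation of the $u^{\epsilon}$ version) would be required.
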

\begin{proof}
For the proof, we refer to Ref.\cite{Choi2021}, here we omit it.
\end{proof}
\begin{lem}\label{O-Lem-relative}
(\cite{Choi2021}.) Let $x,y>0$ and $\gamma>1$. If $0<y_{\min}\leq y\leq y_{\max}<\infty$, then the following inequality holds:
\begin{align*}
P(x|y)&=\frac{1}{\gamma-1}(x^{\gamma}-y^{\gamma})+\frac{\gamma}{\gamma-1}(y-x)y^{\gamma-1}\\
&\geq C \left\{
\begin{array}{l}
(x-y)^2,\ \ if \ \ \frac{y}{2}\leq x\leq 2y,\\
(1+x^{\gamma}),\ \  otherwise,
\end{array}
\right.
\end{align*}
where $C=C(\gamma, y_{min}, y_{max})$ is a positive constant.
\end{lem}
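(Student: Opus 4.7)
The function $P(x|y)$ is exactly the second-order Taylor remainder of the convex function $F(s)=s^\gamma/(\gamma-1)$ expanded at $s=y$; indeed a direct check gives $F(x)-F(y)-F'(y)(x-y)=P(x|y)$, with $F''(s)=\gamma s^{\gamma-2}$. My plan is to split into the two cases and use Taylor's formula for the "near" case and a clean scaling reduction for the "far" cases.

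For the near case $y/2\le x\le 2y$, I would use the integral remainder form
\[
P(x|y)=(x-y)^{2}\int_{0}^{1}\gamma\bigl(y+t(x-y)\bigr)^{\gamma-2}(1-t)\,dt.
\]
On this range, $y+t(x-y)$ lies in $[y/2,2y]\subset[y_{\min}/2,\,2y_{\max}]$, so $(y+t(x-y))^{\gamma-2}$ is bounded below by a positive constant $c_0=c_0(\gamma,y_{\min},y_{\max})$ (taking $(2y_{\max})^{\gamma-2}$ if $1<\gamma<2$ and $(y_{\min}/2)^{\gamma-2}$ if $\gamma\ge 2$). Integrating against $(1-t)\,dt$ yields the desired bound $P(x|y)\ge C(x-y)^{2}$.

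For the far case I would pass to the one-variable function obtained by scaling. Writing $s=x/y$, a direct algebraic rearrangement gives
\[
P(x|y)=\frac{y^{\gamma}}{\gamma-1}\,\tilde\phi(s),\qquad \tilde\phi(s):=s^{\gamma}+(\gamma-1)-\gamma s.
\]
Since $\tilde\phi(1)=0$, $\tilde\phi'(s)=\gamma(s^{\gamma-1}-1)$ vanishes only at $s=1$, and $\tilde\phi''(s)=\gamma(\gamma-1)s^{\gamma-2}>0$, the function $\tilde\phi$ is strictly convex with a unique minimum value $0$ at $s=1$; in particular $\tilde\phi(s)>0$ for all $s\ne 1$. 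When $x\le y/2$ (so $s\le 1/2$), monotonicity of $\tilde\phi$ on $(0,1)$ gives $\tilde\phi(s)\ge\tilde\phi(1/2)>0$, hence $P(x|y)\ge \frac{y_{\min}^{\gamma}\tilde\phi(1/2)}{\gamma-1}$; since $x\le y_{\max}/2$ bounds $1+x^\gamma$ from above, this constant lower bound dominates $C(1+x^\gamma)$.

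The remaining subcase $x\ge 2y$ is the most delicate because $x$ can be arbitrarily large, so I need $\tilde\phi(s)$ to grow like $s^\gamma$ for $s\ge 2$. I would differentiate
$\tilde\phi(s)/s^{\gamma}$ and obtain $\frac{d}{ds}\bigl(\tilde\phi(s)/s^{\gamma}\bigr)=\gamma(\gamma-1)(s-1)/s^{\gamma+1}>0$ on $s>1$, so $\tilde\phi(s)/s^{\gamma}$ is increasing for $s\ge 1$ and
\[
\tilde\phi(s)/s^{\gamma}\ge \tilde\phi(2)/2^{\gamma}=1-(\gamma+1)/2^{\gamma}>0\qquad\text{for }s\ge 2,
\]
the positivity coming from the elementary inequality $2^\gamma>\gamma+1$ for $\gamma>1$ (check by monotonicity of $2^\gamma-\gamma-1$, which vanishes at $\gamma=1$ with positive derivative thereafter). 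Thus $P(x|y)\ge \frac{1-(\gamma+1)/2^{\gamma}}{\gamma-1}\,x^{\gamma}$, and since $x\ge 2y_{\min}$ we absorb the constant $1$ via $1+x^{\gamma}\le\bigl(1+(2y_{\min})^{-\gamma}\bigr)x^{\gamma}$, yielding $P(x|y)\ge C(1+x^\gamma)$. The main obstacle is handling this last subcase uniformly in $\gamma>1$; the scaling function $\tilde\phi$ and the monotonicity of $\tilde\phi(s)/s^\gamma$ is what makes the argument work independently of $\gamma-2$ having either sign.
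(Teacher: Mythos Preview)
Your argument is correct. The paper itself does not give any proof of this lemma; it simply quotes the statement from \cite{Choi2021}. So there is nothing to compare with on the paper's side.

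On the mathematics: the Taylor integral remainder handles the near case $y/2\le x\le 2y$ cleanly, and your scaling reduction $P(x|y)=\dfrac{y^{\gamma}}{\gamma-1}\,\tilde\phi(x/y)$ with $\tilde\phi(s)=s^{\gamma}+(\gamma-1)-\gamma s$ is the right move for the far cases. The only point requiring care is the large-$x$ regime, and your computation
\[
\frac{d}{ds}\!\left(\frac{\tilde\phi(s)}{s^{\gamma}}\right)=\frac{\gamma(\gamma-1)(s-1)}{s^{\gamma+1}}>0\quad(s>1)
\]
together with $2^{\gamma}>\gamma+1$ for $\gamma>1$ gives exactly the needed growth $\tilde\phi(s)\gtrsim s^{\gamma}$ on $[2,\infty)$. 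The absorption of the additive constant via $1+x^{\gamma}\le\bigl(1+(2y_{\min})^{-\gamma}\bigr)x^{\gamma}$ is fine since $x\ge 2y_{\min}$ there. All constants depend only on $\gamma$, $y_{\min}$, $y_{\max}$, as claimed.
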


The following Poincar\'{e} type inequality can be found in \cite{Feireisl2004}.
\begin{lem}\label{O-lem-ineq}
(\cite{Feireisl2004}.) Let $u\in H^{1}(\mathbb{T}^{3})$, and let $\rho$ be a non-negative function satisfying
\begin{equation*}
0<C_{1}\leq \int_{\mathbb{T}^{3}} \rho~dx,\ \int_{\mathbb{T}^{3}} \rho^\gamma~dx \leq C_{2}.
\end{equation*}
 Then there exists a positive constant $\bar{C}$ depending only on $C_{1}$, $C_{2}$ and $\gamma$ such that
\begin{equation}\label{O-lem poincare}
\|u\|_{L^{2}(\mathbb{T}^{3})}^{2}\leq \bar{C}\int_{\mathbb{T}^{3}} \rho|u|^{2}~dx+\bar{C}\|\nabla u\|_{L^{2}(\mathbb{T}^{3})}^{2}.
\end{equation}
\end{lem}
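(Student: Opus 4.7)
The plan is to prove the inequality by contradiction, exploiting the compactness of the Sobolev embedding $H^1(\mathbb{T}^3)\hookrightarrow L^q(\mathbb{T}^3)$ for $q<6$ together with the $L^\gamma$ bound on the weight $\rho$. Suppose, toward contradiction, that no constant $\bar C$ depending only on $C_1,C_2,\gamma$ can work. Then for every $n\in\mathbb N$ there exist $u_n\in H^1(\mathbb T^3)$ and a nonnegative $\rho_n$ with $\int_{\mathbb T^3}\rho_n\,dx\geq C_1$ and $\int_{\mathbb T^3}\rho_n^\gamma\,dx\leq C_2$ such that
\[
\|u_n\|_{L^2(\mathbb T^3)}^2 > n\Bigl(\int_{\mathbb T^3}\rho_n|u_n|^2\,dx + \|\nabla u_n\|_{L^2(\mathbb T^3)}^2\Bigr).
\]
Dividing by $\|u_n\|_{L^2}^2$, I would normalize so that $\|u_n\|_{L^2(\mathbb T^3)}=1$ for all $n$; then $\|\nabla u_n\|_{L^2(\mathbb T^3)}\to 0$ and $\int_{\mathbb T^3}\rho_n|u_n|^2\,dx\to 0$.

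Next I would pass to the limit in $u_n$. The sequence is bounded in $H^1(\mathbb T^3)$, so by Rellich--Kondrachov a subsequence converges strongly in $L^2(\mathbb T^3)$ to some $u$. Combined with $\nabla u_n\to 0$ in $L^2$, this upgrades to convergence in $H^1$, and $\nabla u=0$, so $u$ is a constant $c$ on the torus with $|c|^2|\mathbb T^3|=1$; in particular $c\neq 0$. The same subsequence enjoys strong convergence $u_n\to c$ in $L^q(\mathbb T^3)$ for every $q<6$ by Sobolev compactness, so $|u_n|^2\to|c|^2$ strongly in $L^p(\mathbb T^3)$ for every $p<3$.

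For the density, the bound $\|\rho_n\|_{L^\gamma}^\gamma\leq C_2$ with $\gamma>1$ yields (along a further subsequence) a weak limit $\rho_n\rightharpoonup\rho$ in $L^\gamma(\mathbb T^3)$. Passing to the limit in $\int_{\mathbb T^3}\rho_n\,dx\geq C_1$ against the test function $1$ gives $\int_{\mathbb T^3}\rho\,dx\geq C_1>0$. Now comes the main step: I pair the weak convergence of $\rho_n$ in $L^\gamma$ with the strong convergence of $|u_n|^2$ in the dual $L^{\gamma'}$. Since the main theorem requires $\gamma>3/2$, one has $\gamma'=\gamma/(\gamma-1)<3$, which is precisely the range in which $|u_n|^2\to|c|^2$ converges strongly in $L^{\gamma'}(\mathbb T^3)$. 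The weak--strong pairing then gives
\[
\int_{\mathbb T^3}\rho_n|u_n|^2\,dx \longrightarrow |c|^2\int_{\mathbb T^3}\rho\,dx \geq |c|^2 C_1 > 0,
\]
contradicting $\int_{\mathbb T^3}\rho_n|u_n|^2\,dx\to 0$.

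The delicate step is the weak--strong pairing: one must match the integrability of the weight $\rho_n$ (only in $L^\gamma$) with the available strong convergence of $|u_n|^2$, which in three dimensions lives naturally only in $L^p$ for $p<3$ by the compact Sobolev embedding $H^1\hookrightarrow L^q$ ($q<6$). This is exactly where the hypothesis $\gamma>3/2$ (enforced in Theorem \ref{O-D THM1}) is essential; without it, one would have to refine the argument with Egorov's theorem and equi-integrability to control $\rho_n|u_n|^2$ on sets where the uniform convergence fails, and the constant $\bar C$ would then depend on a quantitative modulus of equi-integrability of $\rho_n$, which the hypothesis does provide through $C_1,C_2,\gamma$. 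Once the contradiction is reached, the existence of $\bar C=\bar C(C_1,C_2,\gamma)$ follows, completing the proof.
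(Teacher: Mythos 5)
Your argument is correct in the regime the paper actually uses, but note that the paper itself offers no proof of this lemma at all: it is quoted verbatim from Feireisl's book \cite{Feireisl2004}, so your compactness proof is a genuine (and self-contained) alternative rather than a reproduction. The contradiction scheme is sound: the normalization $\|u_n\|_{L^2}=1$ is legitimate by homogeneity, Rellich--Kondrachov plus $\|\nabla u_n\|_{L^2}\to 0$ forces $u_n\to c\neq 0$ in $H^1$, interpolation with the $L^6$ bound gives $|u_n|^2\to |c|^2$ in $L^p$ for $p<3$, and the weak--strong pairing with $\rho_n\rightharpoonup\rho$ in $L^\gamma$, $\int\rho\,dx\geq C_1$ (testing against $1\in L^{\gamma'}$ on the finite-measure torus) yields the contradiction. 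Two remarks on scope and on your closing paragraph. First, your duality step genuinely needs $\gamma'<3$, i.e.\ $\gamma>3/2$; since Theorem \ref{O-D THM1} assumes $\gamma>3/2$ throughout, this suffices for every application of \eqref{O-lem poincare} in the paper, but the lemma as stated (and as proved in \cite{Feireisl2004}) holds for all $\gamma>1$. Second, your worry that in the borderline range the constant would depend on ``a quantitative modulus of equi-integrability'' is unfounded: within your own contradiction framework the dependence $\bar C=\bar C(C_1,C_2,\gamma)$ is automatic, and the needed equi-integrability is already quantitative from the hypotheses via H\"{o}lder, $\int_{E}\rho_n\,dx\leq C_2^{1/\gamma}|E|^{(\gamma-1)/\gamma}$; combining this with a.e.\ convergence $u_n\to c$ and Egorov gives $\liminf_n\int_{\mathbb{T}^3}\rho_n|u_n|^2\,dx\geq \tfrac{|c|^2}{2}\bigl(C_1-C_2^{1/\gamma}\delta^{(\gamma-1)/\gamma}\bigr)>0$ for small $\delta$, extending your proof to all $\gamma>1$ at no extra cost. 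What the citation route buys the authors is brevity; what your route buys is a transparent, quantitative-hypothesis-driven proof whose only real constraint ($\gamma>3/2$ in the simplest form) is harmless here and removable as just described.
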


 In order to deduce the relative entropy inequality, we define
\begin{center}
$U$:=$\begin{pmatrix}
\rho_{f}\\
\omega\\
\rho\\
m
\end{pmatrix}$,\ \ \ \ \
$A(U)$:=$\begin{pmatrix}
\omega\\
\frac{\omega\otimes\omega}{\rho_{f}}\\
m\\
\frac{m\otimes m}{\rho}+\rho^{\gamma }I_{3\times3}
\end{pmatrix}$,\\
\end{center}
and
\begin{center}
	$F(U)$:=$\begin{pmatrix}
	0\\
	\rho_{f}(u-u_{f})-\rho_{f}\nabla_{x}K*(\rho_{f}-1)\\
	0\\
	-\rho_{f}(u-u_{f})+\Delta_{x}u
	\end{pmatrix}$.
\end{center}
where $\omega:=\rho_{f}u_{f}$ and $m:=\rho u$. Note that the system \eqref{O-MODI-limitsystem} can be written in the form of conservation of laws:
$$\partial_{t}U+\mathrm{div}_{x}A(U)=F(U).$$
Then the associated energy $H(U)$ to the above system can be written as
$$H(U)=\frac{\omega^{2}}{2\rho_{f}}+\frac{m^{2}}{2\rho}+\frac{1}{\gamma-1}\rho^{\gamma}.$$
We now introduce the relative entropy functional $\mathcal{H}$ as follows:
$$\mathcal{H}(\bar{U}|U):=H(\bar{U})-H(U)-DH(U)(\bar{U}-U),$$
where
\begin{center}
	$\bar{U}$:=$\begin{pmatrix}
	\bar{\rho}_{f}\\
	\bar{\omega}\\
	\bar{\rho}\\
	\bar{m}
	\end{pmatrix}$,
\end{center}
by the direct calculation, we obtain
\begin{equation}\label{O-relative entropy functional}
	\mathcal{H}(\bar{U}|U)=\frac{1}{2}\bar{\rho}_{f}|\bar{u}_{f}-u_{f}|^{2}+\frac{1}{2}\bar{\rho}|\bar{u}-u|^{2}+P(\bar{\rho}|\rho),
\end{equation}
where
\begin{align}\label{O-relative-pressure}
	P(\bar{\rho}|\rho)=\frac{1}{\gamma-1}(\bar{\rho}^{\gamma}-{\rho}^{\gamma})+\frac{\gamma}{\gamma-1}(\rho-\bar{\rho})\rho^{\gamma-1}\geq \gamma \min\left\lbrace \bar{\rho}^{\gamma-2},\rho^{\gamma-2}\right\rbrace (\bar{\rho}-\rho)^{2}.
\end{align}

\begin{lem}\label{O-relative ineq}
The relative entropy functional $\mathcal{H}(\bar{U}|U)$ defined by $\eqref{O-relative entropy functional}$ satisfies
\begin{align*}
&\int_{\mathbb{T}^{3}}\mathcal{H}(\bar{U}|U)~dx+\int_{0}^{t}\int_{\mathbb{T}^{3}}\bar{\rho}_{f}|(\bar{u}_{f}-\bar{u})-({u}_{f}-{u})|^{2}~dxds+\int_{0}^{t}\int_{\mathbb{T}^{3}}|\nabla(u-\bar{u})|^{2}~dxds\\
=&\int_{\mathbb{T}^{3}}\mathcal{H}(\bar{U}_{0}|U_{0})~dx+\int_{0}^{t}\int_{\mathbb{T}^{3}}\partial_{s} H(\bar{U})~dxds+\int_{0}^{t}\int_{\mathbb{T}^{3}}|\nabla \bar{u}|^{2}~dxds+\int_{0}^{t}\int_{\mathbb{T}^{3}}\bar{\rho}_{f}|\bar{u}_{f}-u|^{2}~dxds\\
&+\int_{0}^{t}\int_{\mathbb{T}^{3}}\bar{\rho}_{f}\bar{u}_{f}\cdot\nabla K*(\bar{\rho}_{f}-1)~dxds-\int_{0}^{t}\int_{\mathbb{T}^{3}}DH(U)(\partial_{s}\bar{U}+\mathrm{div}_{x}A(\bar{U})-F(\bar{U}))~dxds\\
&-\int_{0}^{t}\int_{\mathbb{T}^{3}}\nabla DH(U):A(\bar{U}|U)~dxds+\int_{0}^{t}\int_{\mathbb{T}^{3}}(\frac{\bar{\rho}}{\rho}\rho_{f}-\bar{\rho}_{f})(u_{f}-u)\cdot(u-\bar{u})~dxds\\
&+\int_{0}^{t}\int_{\mathbb{T}^{3}}(\frac{\bar{\rho}}{\rho}-1)\Delta u\cdot(u-\bar{u})~dxds-\int_{0}^{t}\int_{\mathbb{T}^{3}}\bar{\rho}_{f}(u_{f}-\bar{u}_{f})\cdot\nabla K*(\rho_{f}-\bar{\rho}_{f})~dxds.
\end{align*}
\end{lem}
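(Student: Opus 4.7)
The plan is to apply the standard Dafermos--DiPerna relative entropy calculus, treating $U$ as a classical solution of the conservation-law form $\partial_t U + \mathrm{div}_x A(U) = F(U)$ of the limit system \eqref{O-MODI-limitsystem}, and treating $\bar{U}$ as a generic field whose PDE residual $\partial_s \bar{U} + \mathrm{div}_x A(\bar{U}) - F(\bar{U})$ is retained explicitly (this is why that residual appears on the right-hand side of the claimed identity). The starting point is the definition
\[
\int_{\mathbb{T}^{3}}\mathcal{H}(\bar{U}|U)\,dx = \int_{\mathbb{T}^{3}}\bigl[H(\bar{U}) - H(U) - DH(U)(\bar{U}-U)\bigr]\,dx,
\]
which I would differentiate in time and then integrate from $0$ to $t$.

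First, expanding $\frac{d}{ds}\int \mathcal{H}(\bar{U}|U)\,dx$ and canceling the terms involving $DH(U)\partial_s U$ that appear with opposite signs, one obtains
\[
\frac{d}{ds}\int_{\mathbb{T}^{3}}\mathcal{H}(\bar{U}|U)\,dx = \int_{\mathbb{T}^{3}}\partial_s H(\bar{U})\,dx - \int_{\mathbb{T}^{3}} DH(U)\,\partial_s \bar{U}\,dx - \int_{\mathbb{T}^{3}} D^2 H(U)\,\partial_s U\cdot(\bar{U}-U)\,dx.
\]
Next, I would substitute $\partial_s U = F(U) - \mathrm{div}_x A(U)$ into the last term and integrate by parts using the chain-rule identity $\nabla DH(U) = D^2 H(U)\nabla U$. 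Artificially inserting the residual $\partial_s \bar{U} + \mathrm{div}_x A(\bar{U}) - F(\bar{U})$ into the middle term and integrating by parts converts the flux contributions into the relative flux $\int \nabla DH(U) : A(\bar{U}|U)\,dx$, where $A(\bar{U}|U) := A(\bar{U}) - A(U) - DA(U)(\bar{U}-U)$. This gives the skeleton of the right-hand side in the lemma.

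The remaining work is to compute the source contribution $\int[DH(\bar{U}) - DH(U)]\cdot F(\bar{U})\,dx$ together with the leftover $\int DH(U)\cdot[F(\bar{U}) - F(U)]\,dx$ using the explicit components of $F$ from \eqref{O-MODI-limitsystem} and $DH = (-\tfrac{|u_f|^2}{2},\, u_f,\, -\tfrac{|u|^2}{2} + \tfrac{\gamma}{\gamma-1}\rho^{\gamma-1},\, u)^T$. The viscous pieces $\Delta \bar{u}$ and $\Delta u$ in the fluid-momentum slot, paired with $\bar{u}$ and $u$ in $DH$ and integrated by parts, produce $\int|\nabla \bar{u}|^2 - \int|\nabla(u-\bar{u})|^2$ after completing the square, matching the dissipation terms in the identity. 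The drag pairings $\bar{\rho}_f(\bar{u}-\bar{u}_f)$ against $(\bar{u}_f - u_f)$ and $(u - \bar{u})$, again after completing the square, yield the $\int \bar{\rho}_f |(\bar{u}_f - \bar{u})-(u_f - u)|^2$ term together with the ``mass-mismatch'' remainder $\int (\tfrac{\bar{\rho}}{\rho}\rho_f - \bar{\rho}_f)(u_f - u)\cdot(u - \bar{u})\,dx$ that arises because the drag in $F(U)$ is written with coefficient $\rho_f$ rather than $\bar{\rho}_f$. The analogous mismatch in the viscous coupling produces $\int(\tfrac{\bar{\rho}}{\rho} - 1)\Delta u\cdot(u-\bar{u})\,dx$. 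Finally, the Coulomb terms $-\bar{\rho}_f\nabla K*(\bar{\rho}_f-1)$ and $-\rho_f\nabla K*(\rho_f-1)$, tested against $\bar{u}_f$ and $u_f$ respectively, give the two Coulomb contributions $\int \bar{\rho}_f \bar{u}_f \cdot \nabla K*(\bar{\rho}_f - 1)\,dx$ and $-\int \bar{\rho}_f(u_f - \bar{u}_f)\cdot \nabla K*(\rho_f - \bar{\rho}_f)\,dx$.

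The main obstacle is the bookkeeping of these source-term mismatches: because $F$ depends nonlinearly on $U$ through divisions of the form $m/\rho$ and $\omega/\rho_f$, the differences $F(\bar{U}) - F(U) - DF(U)(\bar{U}-U)$ do not collapse into a single quadratic ``relative source.'' Instead, one must algebraically identify and preserve the ratios $\bar{\rho}/\rho$ and $\bar{\rho}_f/\rho_f$ that generate the seventh and eighth lines of the claimed identity. Once these are isolated correctly, integrating the resulting identity in $s$ from $0$ to $t$ and moving the dissipation terms $\int \bar{\rho}_f|(\bar{u}_f-\bar{u})-(u_f-u)|^2$ and $\int|\nabla(u-\bar{u})|^2$ to the left-hand side yields precisely the stated identity. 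No compactness argument or Grönwall step is needed at this stage; the lemma is a purely algebraic/integration-by-parts identity valid for any sufficiently regular $\bar{U}$ and any smooth solution $U$ of \eqref{O-MODI-limitsystem}.
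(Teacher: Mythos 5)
Your overall strategy is exactly the paper's: differentiate $\int\mathcal{H}(\bar U|U)\,dx$ in time, use $\partial_s H(U)=DH(U)\partial_s U$ to cancel, substitute $\partial_s U=F(U)-\mathrm{div}_x A(U)$, artificially insert the residual $\partial_s\bar U+\mathrm{div}_x A(\bar U)-F(\bar U)$, and convert the two flux contributions into $-\int\nabla DH(U):A(\bar U|U)\,dx$ by integration by parts. Up to that point your skeleton coincides with the paper's decomposition into $I_1,\dots,I_4$, and the list of final terms you announce is the correct one.

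The step that would fail as written is your identification of the source contribution. The skeleton you yourself set up leaves exactly
\[
-\int_{\mathbb{T}^3}\Bigl[D^2H(U)F(U)(\bar U-U)+DH(U)F(\bar U)\Bigr]\,dx,
\]
and this is the quantity the paper expands componentwise. You propose instead to compute $\int[DH(\bar U)-DH(U)]\cdot F(\bar U)\,dx+\int DH(U)\cdot[F(\bar U)-F(U)]\,dx$, which telescopes to $\int DH(\bar U)\cdot F(\bar U)-DH(U)\cdot F(U)\,dx$. That is a genuinely different quantity: $DH(\bar U)$ never enters the computation, because $\int\partial_s H(\bar U)\,dx$ is kept unexpanded on the right-hand side of the identity and the equation for $\bar U$ is only ever paired with $DH(U)$; and $F(U)$ is paired with $D^2H(U)(\bar U-U)$, not with $DH(U)$. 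Concretely, the viscous part of your combination is $\bar u\cdot\Delta\bar u-u\cdot\Delta u$, which integrates to $\|\nabla u\|_{L^2}^2-\|\nabla\bar u\|_{L^2}^2$ and cannot produce either the dissipation $\int|\nabla(u-\bar u)|^2$ on the left or the remainder $\int(\frac{\bar\rho}{\rho}-1)\Delta u\cdot(u-\bar u)$ on the right; likewise the ratios $\bar\rho/\rho$ in the terms $\int(\frac{\bar\rho}{\rho}\rho_f-\bar\rho_f)(u_f-u)\cdot(u-\bar u)$ and $\int(\frac{\bar\rho}{\rho}-1)\Delta u\cdot(u-\bar u)$ arise precisely from the entries $-m/\rho^{2}$ and $1/\rho$ of $D^2H(U)$ acting on $(\bar\rho-\rho,\bar m-m)$, so they are lost if one replaces $D^2H(U)F(U)(\bar U-U)$ by exact differences of $DH$ or of $F$. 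Once the correct combination is computed (this is the paper's treatment of $I_4$), the completions of squares you describe do go through and yield the stated identity.
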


\begin{proof}
A direct calculations yields
\begin{align*}
\frac{d}{dt}\int_{\mathbb{T}^{3}}\mathcal{H}(\bar{U}|U)~dx=&\int_{\mathbb{T}^{3}}\left\lbrace\partial_{t} H(\bar{U})-\partial_{t} H(U)-D^{2}H(U)\partial_{t}U(\bar{U}-U)-DH(U)(\partial_{t}\bar{U}-\partial_{t}{U})\right\rbrace~dx\\
=&\int_{\mathbb{T}^{3}}\partial_{t} H(\bar{U})~dx-\int_{\mathbb{T}^{3}}DH(U)(\partial_{t}\bar{U}+\mathrm{div}_{x}A(\bar{U})-F(\bar{U}))~dx\\
&+\int_{\mathbb{T}^{3}}D^{2}H(U)\mathrm{div}_{x}A(U)(\bar{U}-U)+DH(U)\mathrm{div}_{x}A(\bar{U})~dx\\
&-\int_{\mathbb{T}^{3}}\left\lbrace D^{2}H(U)F(U)(\bar{U}-U)+DH(U)F(\bar{U})\right\rbrace~dx\\
=&\sum_{i=1}^{4}I_{i}.
\end{align*}
For $I_{3}$. By using the same argument in \cite{Choi2021}, we have
\begin{equation}\label{O-I3}
I_{3}=-\int_{\mathbb{T}^{3}}\nabla DH(U):A(\bar{U}|U)~dx.
\end{equation}
For $I_{4}$, due to
\begin{center}
	$DH(U)$=$\begin{pmatrix}
	-\frac{\omega^{2}}{2\rho_{f}^{2}}\\
	\frac{\omega}{\rho_{f}}\\
	-\frac{m^{2}}{2\rho^{2}}+\frac{\gamma}{\gamma-1}\rho^{\gamma-1}\\
	\frac{m}{\rho}
	\end{pmatrix}$,
\end{center}
and
\begin{center}
	$D^{2}H(U)$=$\begin{pmatrix}
	\frac{\omega^{2}}{\rho_{f}^{3}}& -\frac{\omega}{\rho_{f}^{2}}& 0& 0\\
	-\frac{\omega}{\rho_{f}^{2}}& \frac{1}{\rho_{f}}& 0&  0 \\
	0&  0&\frac{m^{3}}{\rho^{3}}+\gamma\rho^{\gamma-2}& -\frac{m}{\rho^{2}}\\
	0&  0&  -\frac{m}{\rho^{2}}& \frac{1}{\rho}
	\end{pmatrix}$,
\end{center}
then we obtain
\begin{align*}
D^{2}H(U)F(U)(\bar{U}-U)=&-u_{f}\cdot(u-u_{f})(\bar{\rho}_{f}-\rho_{f})+u_{f}\cdot\nabla K*(\rho_{f}-1)(\bar{\rho}_{f}-\rho_{f})\\
&+(u-u_{f})\cdot(\bar{\rho}_{f}\bar{u}_{f}-\rho_{f}u_{f})-\nabla K*(\rho_{f}-1)\cdot(\bar{\rho}_{f}\bar{u}_{f}-\rho_{f}u_{f})\\
&+\frac{\rho_{f}}{\rho}u\cdot(u-u_{f})(\bar{\rho}-\rho)-\frac{u\cdot\Delta u}{\rho}(\bar{\rho}-\rho)-\frac{\rho_{f}}{\rho}(u-u_{f})\cdot(\bar{\rho}\bar{u}-\rho u)\\
&+\frac{1}{\rho}\Delta u\cdot(\bar{\rho}\bar{u}-\rho u),
\end{align*}
and
\begin{align*}
DH(U)F(U)=\bar{\rho}_{f}u_{f}\cdot\bar{u}-\bar{\rho}_{f}\bar{u}_{f}\cdot{u}_{f}-\bar{\rho}_{f}u_{f}\cdot\nabla K*(\bar{\rho}_{f}-1)-\bar{\rho}_{f}u\cdot\bar{u}+\bar{\rho}_{f}\bar{u}_{f}\cdot{u}+u\cdot\Delta \bar{u}.
\end{align*}
Thus, one has
\begin{align*}
D^{2}H(U)&F(U)(\bar{U}-U)+DH(U)F(\bar{U})\\
=&\bar{\rho}_{f}|(\bar{u}_{f}-\bar{u})-({u}_{f}-{u})|^{2}+(\frac{\bar{\rho}}{\rho}\rho_{f}-\bar{\rho}_{f})(u-u_{f})\cdot(u-\bar{u})\\
&-\bar{\rho}_{f}|\bar{u}_{f}-u|^{2}+(\frac{\bar{\rho}}{\rho}-1)\Delta u\cdot(\bar{u}-u)-(u-\bar{u})\cdot\Delta(u-\bar{u})\\
&+\bar{u}\cdot\Delta \bar{u}+\bar{\rho}_{f}(u_{f}-\bar{u}_{f})\cdot\nabla K*(\rho_{f}-\bar{\rho}_{f})-\bar{\rho}_{f}\bar{u}_{f}\cdot\nabla K*(\bar{\rho}_{f}-1),
\end{align*}
then
\begin{align*}
I_{4}=&-\int_{\mathbb{T}^{3}}D^{2}H(U)F(U)(\bar{U}-U)+DH(U)F(\bar{U})~dx\\
=&-\int_{\mathbb{T}^{3}}\bar{\rho}_{f}|(\bar{u}_{f}-\bar{u})-({u}_{f}-{u})|^{2}~dx-\int_{\mathbb{T}^{3}}(\frac{\bar{\rho}}{\rho}\rho_{f}+\bar{\rho}_{f})(u_{f}-u)\cdot(u-\bar{u})~dx\\
&+\int_{\mathbb{T}^{3}}\bar{\rho}_{f}|\bar{u}_{f}-u|^{2}~dx-\int_{\mathbb{T}^{3}}(\frac{\bar{\rho}}{\rho}-1)\Delta u\cdot(\bar{u}-u)~dx-\int_{\mathbb{T}^{3}}|\nabla(u-\bar{u})|^{2}~dx\\
&+\int_{\mathbb{T}^{3}}|\nabla \bar{u}|^{2}~dx-\int_{\mathbb{T}^{3}}\bar{\rho}_{f}(u_{f}-\bar{u}_{f})\cdot\nabla K*(\rho_{f}-\bar{\rho}_{f})~dx+\int_{\mathbb{T}^{3}}\bar{\rho}_{f}\bar{u}_{f}\cdot\nabla K*(\bar{\rho}_{f}-1)~dx.
\end{align*}
Therefore,
\begin{align*}
\frac{d}{dt}&\int_{\mathbb{T}^{3}}\mathcal{H}(\bar{U}|U)~dx+\int_{\mathbb{T}^{3}}\bar{\rho}_{f}|(\bar{u}_{f}-\bar{u})-({u}_{f}-{u})|^{2}~dx+\int_{\mathbb{T}^{3}}|\nabla(u-\bar{u})|^{2}~dx\\
=&\int_{\mathbb{T}^{3}}\partial_{t} H(\bar{U})~dx+\int_{\mathbb{T}^{3}}|\nabla \bar{u}|^{2}~dx+\int_{\mathbb{T}^{3}}\bar{\rho}_{f}|\bar{u}_{f}-u|^{2}~dx+\int_{\mathbb{T}^{3}}\bar{\rho}_{f}\bar{u}_{f}\cdot\nabla K*(\bar{\rho}_{f}-1)~dx\\
&-\int_{\mathbb{T}^{3}}DH(U)(\partial_{t}\bar{U}+\mathrm{div}_{x}A(\bar{U})-F(\bar{U}))~dx-\int_{\mathbb{T}^{3}}\nabla DH(U):A(\bar{U}|U)~dx\\
&+\int_{\mathbb{T}^{3}}(\frac{\bar{\rho}}{\rho}\rho_{f}-\bar{\rho}_{f})(u_{f}-u)\cdot(u-\bar{u})~dx+\int_{\mathbb{T}^{3}}(\frac{\bar{\rho}}{\rho}-1)\Delta u\cdot(u-\bar{u})~dx\\
&-\int_{\mathbb{T}^{3}}\bar{\rho}_{f}(u_{f}-\bar{u}_{f})\cdot\nabla K*(\rho_{f}-\bar{\rho}_{f})~dx.
\end{align*}
\end{proof}
\subsection{Relative entropy estimates}\label{O-Section2-3}
Now we replace $\bar{U}$ with $U^{\epsilon}$ in Lemma \ref{O-relative ineq}, one has
\begin{align*}
&\int_{\mathbb{T}^{3}}\mathcal{H}(U^{\epsilon}|U)~dx+\int_{0}^{t}\int_{\mathbb{T}^{3}}{\rho}^{\epsilon}_{f}|({u}^{\epsilon}_{f}-{u}^{\epsilon})-({u}_{f}-{u})|^{2}~dxds+\int_{0}^{t}\int_{\mathbb{T}^{3}}|\nabla(u-{u}^{\epsilon})|^{2}~dxds\\
=&\underbrace{\int_{\mathbb{T}^{3}}\mathcal{H}({U}^{\epsilon}_{0}|U_{0})~dx}_{\mathcal{J}_1}\underbrace{+\int_{0}^{t}\int_{\mathbb{T}^{3}}\partial_{s} H(U^{\epsilon})~dxds+\int_{0}^{t}\int_{\mathbb{T}^{3}}|\nabla {u}^{\epsilon}|^{2}~dxds+\int_{0}^{t}\int_{\mathbb{T}^{3}}{\rho}^{\epsilon}_{f}|{u}^{\epsilon}_{f}-u|^{2}~dxds}_{\mathcal{J}_2}\\
&\underbrace{+\int_{0}^{t}\int_{\mathbb{T}^{3}}{\rho}^{\epsilon}_{f}{u}^{\epsilon}_{f}\cdot\nabla K*({\rho}^{\epsilon}_{f}-1)~dxds}_{\mathcal{J}_2}\underbrace{-\int_{0}^{t}\int_{\mathbb{T}^{3}}DH(U)(\partial_{s}{U}^{\epsilon}+\mathrm{div}_{x}A({U}^{\epsilon})-F({U}^{\epsilon}))~dxds}_{\mathcal{J}_3}\\
&\underbrace{-\int_{0}^{t}\int_{\mathbb{T}^{3}}\nabla DH(U):A({U}^{\epsilon}|U)~dxds}_{\mathcal{J}_4}\underbrace{+\int_{0}^{t}\int_{\mathbb{T}^{3}}(\frac{{\rho}^{\epsilon}}{\rho}\rho_{f}-{\rho}^{\epsilon}_{f})(u_{f}-u)\cdot(u-{u}^{\epsilon})~dxds}_{\mathcal{J}_5}\\
&\underbrace{+\int_{0}^{t}\int_{\mathbb{T}^{3}}(\frac{\rho^{\epsilon}}{\rho}-1)\Delta u\cdot(u-{u}^{\epsilon})~dxds}_{\mathcal{J}_6}\underbrace{-\int_{0}^{t}\int_{\mathbb{T}^{3}}{\rho}^{\epsilon}_{f}(u_{f}-{u}^{\epsilon}_{f})\cdot\nabla K*(\rho_{f}-{\rho}^{\epsilon}_{f})~dxds}_{\mathcal{J}_7}\\
=:&\sum_{i=1}^{7}J_{i}.
\end{align*}
We estimate every terms $J_{i}\ (i=1,\cdots,7)$ as follows.
 \begin{itemize}
	\item (Estimate of $\mathcal{J}_1$): From assumption \eqref{O-assump-1}, it is clear that
	\begin{align*}
	\int_{\mathbb{T}^{3}}\mathcal{H}(U_0^\epsilon\vert U_0)dx\leq C\sqrt{\epsilon}.
	\end{align*}
	\item (Estimate of $\mathcal{J}_2$): Due to
	\begin{equation*}
	\frac{1}{2}\frac{d}{dt}\int_{\mathbb{T}^{3}}|\nabla K*(\rho_{f}^{\epsilon}-1)|^{2}~dx=\int_{\mathbb{T}^{3}}{\rho}^{\epsilon}_{f}{u}^{\epsilon}_{f}\cdot\nabla K*({\rho}^{\epsilon}_{f}-1)~dx,
	\end{equation*}	
	then by using Lemma \ref{O-modified entropy estimate}, we have
	\begin{align*}
	\mathcal{J}_2=&\int_{\mathbb{T}^{3}}H(U^{\epsilon})~dx-\int_{\mathbb{T}^{3}}H(U_{0}^{\epsilon})~dx+\int_{0}^{t}\int_{\mathbb{T}^{3}}|\nabla {u}^{\epsilon}|^{2}~dxds+\int_{0}^{t}\int_{\mathbb{T}^{3}}{\rho}^{\epsilon}_{f}|{u}^{\epsilon}_{f}-u|^{2}~dxds\\
	&+\frac{1}{2}\int_{\mathbb{T}^{3}}|\nabla K*(\rho_{f}^{\epsilon}-1)|^{2}~dx-\frac{1}{2}\int_{\mathbb{T}^{3}}|\nabla K*(\rho_{f_{0}}^{\epsilon}-1)|^{2}~dx\\
	 =&\frac{1}{2}\int_{\mathbb{T}^{3}}\rho^{\epsilon}_{f}|u^{\epsilon}_{f}|^{2}~dx+\frac{1}{2}\int_{\mathbb{T}^{3}}\rho^{\epsilon}|u^{\epsilon}|^{2}~dx+\int_{\mathbb{T}^{3}}\frac{1}{\gamma-1}(\rho^{\epsilon})^{\gamma}~dx+\int_{0}^{t}\int_{\mathbb{T}^{3}}|\nabla {u}^{\epsilon}|^{2}~dxds\\
	&+\int_{0}^{t}\int_{\mathbb{T}^{3}}{\rho}^{\epsilon}_{f}|{u}^{\epsilon}_{f}-u|^{2}~dxds+\frac{1}{2}\int_{\mathbb{T}^{3}}|\nabla K*(\rho_{f}^{\epsilon}-1)|^{2}~dx-\frac{1}{2}\int_{\mathbb{T}^{3}}|\nabla K*(\rho_{f_{0}}^{\epsilon}-1)|^{2}~dx\\
	 &-\int_{\mathbb{T}^{3}}H(U_{0}^{\epsilon})~dx+\frac{1}{2}\int_{\mathbb{T}^{3}\times\mathbb{R}^{3}}|\xi|^{2}f^{\epsilon}~dxd\xi-\frac{1}{2}\int_{\mathbb{T}^{3}\times\mathbb{R}^{3}}|\xi|^{2}f^{\epsilon}~dxd\xi\\
	=&\mathcal{F}(f^{\epsilon},\rho^{\epsilon},u^{\epsilon})(t)+\int_{0}^{t}\int_{\mathbb{T}^{3}}|\nabla {u}^{\epsilon}|^{2}~dxds+\int_{0}^{t}\int_{\mathbb{T}^{3}}{\rho}^{\epsilon}_{f}|{u}^{\epsilon}_{f}-u|^{2}~dxds\\
	&-\frac{1}{2}\int_{\mathbb{T}^{3}}|\nabla K*(\rho_{f_{0}}^{\epsilon}-1)|^{2}~dx-\int_{\mathbb{T}^{3}}H(U_{0}^{\epsilon})~dx
	 +\frac{1}{2}\int_{\mathbb{T}^{3}}\rho^{\epsilon}_{f}|u^{\epsilon}_{f}|^{2}~dx-\frac{1}{2}\int_{\mathbb{T}^{3}\times\mathbb{R}^{3}}|\xi|^{2}f^{\epsilon}~dxd\xi\\
	\leq& \mathcal{F}(f_{0}^{\epsilon},\rho_{0}^{\epsilon},u_{0}^{\epsilon})+C(T)\epsilon-\frac{1}{2}\int_{\mathbb{T}^{3}}|\nabla K*(\rho_{f_{0}}^{\epsilon}-1)|^{2}~dx-\int_{\mathbb{T}^{3}}H(U_{0}^{\epsilon})~dx\\
	\leq& C\sqrt{\epsilon},
	\end{align*}
	where we uesd the fact that $\rho^{\epsilon}_{f}|u^{\epsilon}_{f}|^{2}\leq\int_{\mathbb{R}^{3}}|\xi|^{2}f^{\epsilon}~d\xi$ and assumption \eqref{O-assump-2}.
	\item (Estimate of $\mathcal{J}_3$): It follows from \eqref{O-written-eq1.1} that
	\begin{equation}\label{O-MODI-system epsilon}
	\left\{
	\begin{array}{l}
	\partial_t{\rho^{\epsilon}_{f}}+\mathrm{div}_{x}{(\rho^{\epsilon}_{f}u^{\epsilon}_{f})}=0,\\
	\partial_{t}(\rho^{\epsilon}_{f}u^{\epsilon}_{f})+\mathrm{div}_{x}(\rho^{\epsilon}_{f}u^{\epsilon}_{f}\otimes u^{\epsilon}_{f})+\mathrm{div}_{x}\left(\int_{\mathbb{R}^{3}}(\xi\otimes\xi-u_{f}^{\epsilon}\otimes u_{f}^{\epsilon})f^{\epsilon}~d\xi\right)=\rho^{\epsilon}_{f}(u^{\epsilon}-u^{\epsilon}_{f})-\rho^{\epsilon}_{f}\nabla_{x}K*(\rho^{\epsilon}_{f}-1),\\
	\partial_{t}\rho^{\epsilon}+\mathrm{div}_{x}(\rho^{\epsilon} u^{\epsilon})=0,\\
	\partial_{t}(\rho^{\epsilon} u^{\epsilon})+\mathrm{div}_{x}(\rho^{\epsilon} u^{\epsilon}\otimes u^{\epsilon})+\nabla_{x}p^{\epsilon}-\Delta_{x} u^{\epsilon}=-\rho^{\epsilon}_{f}(u^{\epsilon}-u^{\epsilon}_{f}),
	\end{array}
	\right.
	\end{equation}
	in the sense of distributions. This implies
	\begin{align*}
	-\int_{0}^{t}&\int_{\mathbb{T}^{3}}DH(U)(\partial_{s}{U}^{\epsilon}+\mathrm{div}_{x}A({U}^{\epsilon})-F({U}^{\epsilon}))~dxds\\
	=&\int_{0}^{t}\int_{\mathbb{T}^{3}}u_{f}\cdot\mathrm{div}_{x}\left(\int_{\mathbb{R}^{3}}(\xi\otimes\xi-u_{f}^{\epsilon}\otimes u_{f}^{\epsilon})f^{\epsilon}~d\xi\right)dxds\\
	=&-\int_{0}^{t}\int_{\mathbb{T}^{3}}\nabla u_{f}:\left(\int_{\mathbb{R}^{3}}(\xi\otimes\xi-u_{f}^{\epsilon}\otimes u_{f}^{\epsilon})f^{\epsilon}~d\xi\right)dxds,
	\end{align*}
	then we have
	\begin{align*}
	\int_{0}^{t}&\int_{\mathbb{T}^{3}}\nabla u_{f}:\left(\int_{\mathbb{R}^{3}}(u_{f}^{\epsilon}\otimes u_{f}^{\epsilon}-\xi\otimes\xi)f^{\epsilon}~d\xi\right)dxds\\
	\leq& \|\nabla u_{f}\|_{L_{t,x}^{\infty}}\int_{0}^{t}\int_{\mathbb{T}^{3}}\left|\int_{\mathbb{R}^{3}}(u_{f}^{\epsilon}\otimes u_{f}^{\epsilon}-\xi\otimes\xi)f^{\epsilon}~d\xi\right|dxds\\
	\leq& C\int_{0}^{t}\int_{\mathbb{T}^{3}}\left|\int_{\mathbb{R}^{3}}(u_{f}^{\epsilon}\otimes(u_{f}^{\epsilon}-\xi)+(u_{f}^{\epsilon}-\xi) \otimes\xi)f^{\epsilon}~d\xi\right|dxds\\
	\leq& C\left(\int_{0}^{t}\int_{\mathbb{T}^{3}\times\mathbb{R}^{3}}|u_{f}^{\epsilon}|^{2}f^{\epsilon}~d\xi dxds\right)^{\frac{1}{2}}\left(\int_{0}^{t}\int_{\mathbb{T}^{3}\times\mathbb{R}^{3}}|u_{f}^{\epsilon}-\xi|^{2}f^{\epsilon}~d\xi dxds\right)^{\frac{1}{2}}\\
	&+\left(\int_{0}^{t}\int_{\mathbb{T}^{3}\times\mathbb{R}^{3}}|u_{f}^{\epsilon}-\xi|^{2}f^{\epsilon}~d\xi dxds\right)^{\frac{1}{2}}\left(\int_{0}^{t}\int_{\mathbb{T}^{3}\times\mathbb{R}^{3}}|\xi|^{2}f^{\epsilon}~d\xi dxds\right)^{\frac{1}{2}}\\
	=& C\left(\int_{0}^{t}\int_{\mathbb{T}^{3}\times\mathbb{R}^{3}}|u_{f}^{\epsilon}-\xi|^{2}f^{\epsilon}~d\xi dxds\right)^{\frac{1}{2}}\left[\left(\int_{0}^{t}\int_{\mathbb{T}^{3}\times\mathbb{R}^{3}}|u_{f}^{\epsilon}|^{2}f^{\epsilon}~d\xi dxds\right)^{\frac{1}{2}}+\left(\int_{0}^{t}\int_{\mathbb{T}^{3}\times\mathbb{R}^{3}}|\xi|^{2}f^{\epsilon}~d\xi dxds\right)^{\frac{1}{2}}\right]\\
	\leq&C\left(\int_{0}^{t}\int_{\mathbb{T}^{3}\times\mathbb{R}^{3}}|u_{f}^{\epsilon}-\xi|^{2}f^{\epsilon}~d\xi dxds\right)^{\frac{1}{2}}\left(\int_{0}^{t}\int_{\mathbb{T}^{3}\times\mathbb{R}^{3}}|\xi|^{2}f^{\epsilon}~d\xi dxds\right)^{\frac{1}{2}}\\
	\leq&C\sqrt{\epsilon},
	\end{align*}
	where $C=C(T,\|\nabla u_{f}\|_{L_{t,x}^{\infty}})>0$ is a constant independent of $\epsilon$.
	\item (Estimate of $\mathcal{J}_4$): By the definition of $A(U^{\epsilon}|U )$, one has
	\begin{center}
		$A(U^{\epsilon}|U)$=$\begin{pmatrix}
		0\\
		\rho_{f}^{\epsilon}(u_{f}^{\epsilon}-u_{f})\otimes(u_{f}^{\epsilon}-u_{f})\\
		0\\
		\rho^{\epsilon}(u^{\epsilon}-u)\otimes(u^{\epsilon}-u)+(\gamma-1)P(\rho^{\epsilon}|\rho)\mathbb{I}_{3\times3}
		\end{pmatrix}$,
	\end{center}
then
\begin{align*}
\mathcal{J}_4=&-\int_{0}^{t}\int_{\mathbb{T}^{3}}\rho_{f}^{\epsilon}(u_{f}^{\epsilon}-u_{f})\otimes(u_{f}^{\epsilon}-u_{f}):\nabla u_{f}~dxds\\
&-\int_{0}^{t}\int_{\mathbb{T}^{3}}\rho^{\epsilon}(u^{\epsilon}-u)\otimes(u^{\epsilon}-u):\nabla u+(\gamma-1)P(\rho^{\epsilon}|\rho)\mathbb{I}_{3\times3}:\nabla u~dxds\\
\leq &\|\nabla u_{f}\|_{L_{t,x}^{\infty}}\int_{0}^{t}\int_{\mathbb{T}^{3}}\rho_{f}^{\epsilon}|u_{f}^{\epsilon}-u_{f}|^{2}~dxds+\|\nabla u\|_{L_{t,x}^{\infty}}\int_{0}^{t}\int_{\mathbb{T}^{3}}\rho^{\epsilon}|u^{\epsilon}-u|^{2}+(\gamma-1)P(\rho^{\epsilon}|\rho)~dxds\\
\leq &C(\gamma,\|\nabla u_{f}\|_{L_{t,x}^{\infty}},\|\nabla u\|_{L_{t,x}^{\infty}})\int_{0}^{t}\int_{\mathbb{T}^{3}}\mathcal{H}(U^{\epsilon}|U)~dxds.
\end{align*}
\item (Estimate of $\mathcal{J}_5$):
\begin{align*}
\mathcal{J}_5=&\int_{0}^{t}\int_{\mathbb{T}^{3}}(\frac{{\rho}^{\epsilon}}{\rho}\rho_{f}-{\rho}^{\epsilon}_{f})(u_{f}-u)\cdot(u-{u}^{\epsilon})~dxds\\
=&\int_{0}^{t}\int_{\mathbb{T}^{3}}\rho_{f}(\frac{{\rho}^{\epsilon}-\rho}{\rho})(u_{f}-u)\cdot(u-{u}^{\epsilon})~dxds+\int_{0}^{t}\int_{\mathbb{T}^{3}}(\rho_{f}-\rho_{f}^{\epsilon })(u_{f}-u)\cdot(u-{u}^{\epsilon})~dxds\\
=&\mathcal{J}_{5}^{1}+\mathcal{J}_{5}^{2}.
\end{align*}
The proof idea is similar to \cite{Choi2021}, here for the reader's convenience,
we give the proof details.

Let $\rho_{*}:=\inf\limits_{(t,x)\in[0,T]\times\mathbb{T}^{3}} \rho(t,x)>0$, and first consider the case $\gamma\in(\frac{3}{2},2]$, we use the inequality \eqref{O-relative-pressure} to obtain
\begin{align*}
&\left|\int_{\mathbb{T}^{3}}\rho_{f}(\frac{{\rho}^{\epsilon}-\rho}{\rho})(u_{f}-u)\cdot(u-{u}^{\epsilon})~dx \right| \\
&\leq \|\frac{\rho_{f}}{\rho}\|_{L_{t,x}^{\infty}}\int_{\mathbb{T}^{3}}|{\rho}^{\epsilon}-\rho||u_{f}-u||u-{u}^{\epsilon}|~dx\\
&\leq \frac{\|\rho_{f}\|_{L_{t,x}^{\infty}}}{\rho_{*}}\int_{\mathbb{T}^{3}}|{\rho}^{\epsilon}-\rho||u_{f}-u||u-{u}^{\epsilon}|~dx\\
&\leq C(\|\rho_{f}\|_{L_{t,x}^{\infty}},\rho_{*}) \left(\int_{\mathbb{T}^{3}} \min\left\lbrace(\rho^{\epsilon})^{\gamma-2}, \rho^{\gamma-2}\right\rbrace(\rho^{\epsilon}-\rho)^{2} ~dx\right)^{\frac{1}{2}}\left(\int_{\mathbb{T}^{3}} \left((\rho^{\epsilon})^{2-\gamma}+\rho^{2-\gamma}\right) |u-{u}^{\epsilon}|^{2}|u_{f}-u|^{2} ~dx\right)^{\frac{1}{2}}\\
&\leq C(\|\rho_{f}\|_{L_{t,x}^{\infty}},\rho_{*},\gamma)\left(\int_{\mathbb{T}^{3}} \mathcal{H}(U^{\epsilon}|U) ~dx\right)^{\frac{1}{2}}\left(\int_{\mathbb{T}^{3}} \left((\rho^{\epsilon})^{2-\gamma}+\rho^{2-\gamma}\right) |u-{u}^{\epsilon}|^{2}|u_{f}-u|^{2} ~dx\right)^{\frac{1}{2}}.
\end{align*}
By using Poincar\'{e} type  inequality \eqref{O-lem poincare}, we have
\begin{align*}
	\int_{\mathbb{T}^{3}}& \left((\rho^{\epsilon})^{2-\gamma}+\rho^{2-\gamma}\right) |u-{u}^{\epsilon}|^{2}|u_{f}-u|^{2} ~dx\\
	&\leq \|\rho\|_{L_{t,x}^{\infty}}^{2-\gamma}\|u_{f}-u\|_{L_{x}^{\infty}}^{2}\|u-u^{\epsilon}\|_{L_{x}^{2}}^{2}+\int_{\mathbb{T}^{3}} (\rho^{\epsilon})^{2-\gamma} |u-{u}^{\epsilon}|^{2}|u_{f}-u|^{2} ~dx\\
	&\leq C \|\rho\|_{L_{t,x}^{\infty}}^{2-\gamma}\|u_{f}-u\|_{L_{x}^{\infty}}^{2}\left[ \bar{C}\int_{\mathbb{T}^{3}}\rho^{\epsilon}|u-u^{\epsilon}|^{2}~dx+\bar{C}\|\nabla(u-u^{\epsilon})\|^{2}_{L^{2}_{x}}\right]+\int_{\mathbb{T}^{3}} (\rho^{\epsilon})^{2-\gamma} |u-{u}^{\epsilon}|^{2}|u_{f}-u|^{2} ~dx\\
	&\leq C(\|\rho\|_{L_{t,x}^{\infty}},\gamma,\bar{C},\|u_{f}-u\|_{L_{t,x}^{\infty}}) \left( \int_{\mathbb{T}^{3}}\mathcal{H}(U^{\epsilon}|U)~dx+\|\nabla(u-u^{\epsilon})\|^{2}_{L^{2}_{x}}\right)+\int_{\mathbb{T}^{3}} (\rho^{\epsilon})^{2-\gamma} |u-{u}^{\epsilon}|^{2}|u_{f}-u|^{2} ~dx.
\end{align*}
For $\gamma=2$, by \eqref{O-lem poincare}, we obtain
\begin{align*}
	\int_{\mathbb{T}^{3}} (\rho^{\epsilon})^{2-\gamma} |u-{u}^{\epsilon}|^{2}|u_{f}-u|^{2} ~dx&\leq \|u_{f}-u\|_{L_{x}^{\infty}}^{2}\|u-u^{\epsilon}\|_{L_{x}^{2}}^{2}\\
	&\leq\|u_{f}-u\|_{L_{x}^{\infty}}^{2} \left[ \bar{C}\int_{\mathbb{T}^{3}}\rho^{\epsilon}|u-u^{\epsilon}|^{2}~dx+\bar{C}\|\nabla(u-u^{\epsilon})\|^{2}_{L^{2}_{x}}\right]\\
	 &\leq C(\bar{C},\|u_{f}-u\|_{L_{x}^{\infty}})\left( \int_{\mathbb{T}^{3}}\rho^{\epsilon}|u-u^{\epsilon}|^{2}~dx+\|\nabla(u-u^{\epsilon})\|^{2}_{L^{2}_{x}}\right)\\
	 &\leq C(\bar{C},\|u_{f}-u\|_{L_{x}^{\infty}})\left( \int_{\mathbb{T}^{3}}\mathcal{H}(U^{\epsilon}|U)~dx+\|\nabla(u-u^{\epsilon})\|^{2}_{L^{2}_{x}}\right).
\end{align*}
For $\gamma\in(\frac{3}{2},2)$, by using Young's inequality and \eqref{O-lem poincare}, one has
\begin{align*}
\int_{\mathbb{T}^{3}}& (\rho^{\epsilon})^{2-\gamma} |u-{u}^{\epsilon}|^{2}|u_{f}-u|^{2} ~dx\\
&=\int_{\mathbb{T}^{3}} (\rho^{\epsilon})^{2-\gamma} |u-{u}^{\epsilon}|^{(4-2\gamma)+(2\gamma-2)}|u_{f}-u|^{2} ~dx\\
&\leq(2-\gamma)\int_{\mathbb{T}^{3}} \rho^{\epsilon} |u-{u}^{\epsilon}|^{2}~dx+(\gamma-1)\int_{\mathbb{T}^{3}}|u-u^{\epsilon}|^{2}|u_{f}-u|^{\frac{2}{\gamma-1}}dx\\
&\leq(2-\gamma)\int_{\mathbb{T}^{3}} \rho^{\epsilon} |u-{u}^{\epsilon}|^{2}~dx+(\gamma-1)\|u_{f}-u\|^{\frac{2}{\gamma-1}}_{L_{x}^{\infty}}\int_{\mathbb{T}^{3}}|u-u^{\epsilon}|^{2}dx\\
&\leq(2-\gamma)\int_{\mathbb{T}^{3}} \rho^{\epsilon} |u-{u}^{\epsilon}|^{2}~dx+(\gamma-1)\|u_{f}-u\|^{\frac{2}{\gamma-1}}_{L_{x}^{\infty}}\left[ \bar{C}\int_{\mathbb{T}^{3}}\rho^{\epsilon}|u-u^{\epsilon}|^{2}~dx+\bar{C}\|\nabla(u-u^{\epsilon})\|^{2}_{L^{2}_{x}}\right]\\
&\leq C(\gamma,\bar{C},\|u_{f}-u\|_{L_{t,x}^{\infty}})\left(\int_{\mathbb{T}^{3}} \mathcal{H}(U^{\epsilon}|U) ~dx+\|\nabla(u-u^{\epsilon})\|_{L_{x}^{2}}^{2}\right),
\end{align*}
thus, for $\gamma\in(\frac{3}{2},2]$, we have
\begin{align*}
\mathcal{J}_{5}^{1}&\leq C\int_{0}^{t}\left(\int_{\mathbb{T}^{3}} \mathcal{H}(U^{\epsilon}|U) ~dx\right)^{\frac{1}{2}}\left(\int_{\mathbb{T}^{3}} \mathcal{H}(U^{\epsilon}|U) ~dx+\|\nabla(u-u^{\epsilon})\|_{L_{x}^{2}}^{2}\right)^{\frac{1}{2}}\\
&\leq C\int_{0}^{t}\int_{\mathbb{T}^{3}} \mathcal{H}(U^{\epsilon}|U) ~dxds+\frac{1}{8}\int_{0}^{t}\int_{\mathbb{T}^{3}}|\nabla(u-u^{\epsilon})|^{2}~dxds
\end{align*}
where $C=C(\bar{C},\|\rho_{f}\|_{L_{t,x}^{\infty}},\rho_{*},\|\rho\|_{L_{t,x}^{\infty}},\gamma,\|u_{f}-u\|_{L_{t,x}^{\infty}})>0$.

For $\gamma>2$, we use Lemma \ref{O-Lem-relative} with $x=\rho^{\epsilon}$, $y=\rho$, $y_{\min}=\rho_{*}$, $y_{\max}=\|\rho\|_{L_{t,x}^{\infty}}$, one has
\begin{align*}
		&\left|\int_{\mathbb{T}^{3}}\rho_{f}(\frac{{\rho}^{\epsilon}-\rho}{\rho})(u_{f}-u)\cdot(u-{u}^{\epsilon})~dx \right| \\
		&\leq \frac{\|\rho_{f}(u_{f}-u)\|_{L_{t,x}^{\infty}}}{\rho_{*}}\int_{\mathbb{T}^{3}}|{\rho}^{\epsilon}-\rho||u-{u}^{\epsilon}|~dx\\
		&\leq C\left(\int_{\mathbb{T}^{3}\cap\left\lbrace \frac{\rho}{2}\leq \rho^{\epsilon}\leq 2\rho\right\rbrace }+\int_{\mathbb{T}^{3}\cap\left\lbrace \frac{\rho}{2}\leq \rho^{\epsilon}\leq 2\rho\right\rbrace^{c} }\right)|{\rho}^{\epsilon}-\rho||u-{u}^{\epsilon}|~dx\\
		&=:\mathcal{J}_{5}^{11}+\mathcal{J}_{5}^{12},
\end{align*}
where $C=C(\rho_{*},\|\rho_{f}\|_{L_{t,x}^{\infty}},\|u_{f}-u\|_{L_{t,x}^{\infty}})>0$.
\begin{align*}
\mathcal{J}_{5}^{11}&\leq \left(\int_{\mathbb{T}^{3}\cap\left\lbrace \frac{\rho}{2}\leq \rho^{\epsilon}\leq 2\rho\right\rbrace } \frac{|\rho^{\epsilon}-\rho|^{2}}{\rho^{\epsilon}}~dx\right)^{\frac{1}{2}}\left(\int_{\mathbb{T}^{3}\cap\left\lbrace \frac{\rho}{2}\leq \rho^{\epsilon}\leq 2\rho\right\rbrace } \rho^{\epsilon}|u^{\epsilon}-u|^{2}~dx\right)^{\frac{1}{2}}\\
&\leq \left(\int_{\mathbb{T}^{3}\cap\left\lbrace \frac{\rho}{2}\leq \rho^{\epsilon}\leq 2\rho\right\rbrace } \frac{|\rho^{\epsilon}-\rho|^{2}}{\frac{\rho}{2}}~dx\right)^{\frac{1}{2}}\left(\int_{\mathbb{T}^{3}}  \mathcal{H}(U^{\epsilon}|U)~dx\right)^{\frac{1}{2}}\\
&\leq C(\rho_{*},\|\rho\|_{L_{t,x}^{\infty}},\gamma)\int_{\mathbb{T}^{3}}  \mathcal{H}(U^{\epsilon}|U)~dx.
\end{align*}
For $\mathcal{J}_{5}^{12}$, $\mathbb{T}^{3}\cap\left\lbrace \frac{\rho}{2}\leq \rho^{\epsilon}\leq 2\rho\right\rbrace^{c}=\mathbb{T}^{3}\cap\left\lbrace \rho^{\epsilon}> 2\rho\right\rbrace\cup\left\lbrace \rho^{\epsilon}< \frac{\rho}{2}\right\rbrace$. On the region $\left\lbrace \rho^{\epsilon}> 2\rho\right\rbrace$,  we obtain
\begin{align*}
&\int_{\mathbb{T}^{3}\cap\left\lbrace \rho^{\epsilon}> 2\rho\right\rbrace}|\rho^{\epsilon}-\rho||u^{\epsilon}-u|~dx\\
&\leq \left(\int_{\mathbb{T}^{3}} \min\left\lbrace(\rho^{\epsilon})^{\gamma-2},\rho^{\gamma-2} \right\rbrace(\rho-\rho^{\epsilon})^{2}~dx \right)^{\frac{1}{2}}\left(\int_{\mathbb{T}^{3}\cap\left\lbrace \rho^{\epsilon}> 2\rho\right\rbrace}( (\rho^{\epsilon})^{2-\gamma}+\rho^{2-\gamma}) |u-u^{\epsilon}| ^{2}~dx \right)^{\frac{1}{2}}\\
&\leq \left( \int_{\mathbb{T}^{3}} \mathcal{H}(U^{\epsilon}|U)~dx\right) ^{\frac{1}{2}}\left(\int_{\mathbb{T}^{3}\cap\left\lbrace \rho^{\epsilon}> 2\rho\right\rbrace}((2\rho)^{2-\gamma}+\rho^{2-\gamma}) |u-u^{\epsilon}| ^{2}~dx \right)^{\frac{1}{2}}\\
&\leq C\left(\int_{\mathbb{T}^{3}} \mathcal{H}(U^{\epsilon}|U)~dx\right) ^{\frac{1}{2}}\|u-u^{\epsilon}\|_{L_{x}^{2}}\\
&\leq C \left( \int_{\mathbb{T}^{3}} \mathcal{H}(U^{\epsilon}|U)~dx\right) ^{\frac{1}{2}}\left( \bar{C}\int_{\mathbb{T}^{3}}\rho^{\epsilon}|u-u^{\epsilon}|^{2}~dx+\bar{C}\|\nabla(u-u^{\epsilon})\|^{2}_{L^{2}_{x}}\right)^{\frac{1}{2}}\\
&\leq C  \int_{\mathbb{T}^{3}} \mathcal{H}(U^{\epsilon}|U)~dx +\frac{1}{16}\|\nabla(u-u^{\epsilon})\|^{2}_{L_{x}^{2}},
\end{align*}
where $C=C(\gamma,\mathbb{T}^{3},\rho_{*},\bar{C},\|\rho\|_{L_{t,x}^{\infty}})>0$.

On the region $\mathbb{T}^{3}\cap\left\lbrace  \rho^{\epsilon}<\frac{\rho}{2}\right\rbrace$, we note that $|\rho^{\epsilon}-\rho|=\rho-\rho^{\epsilon}>\frac{\rho}{2}$. Thus, by using Cauchy-Schwarz inequality, Sobolev inequality and Lemma \ref{O-Lem-relative}, one has
\begin{align*}
	&\int_{\mathbb{T}^{3}\cap\left\lbrace \rho^{\epsilon} <\frac{\rho}{2}\right\rbrace}|\rho^{\epsilon}-\rho||u^{\epsilon}-u|~dx\\
	&\leq C\left(\int_{\mathbb{T}^{3}\cap\left\lbrace \rho^{\epsilon} <\frac{\rho}{2}\right\rbrace}|\rho^{\epsilon}-\rho|^{2}~dx\right)^{\frac{1}{2}}\|u^{\epsilon}-u\|_{L^{2}_{x}}\\
	&\leq C\left(\int_{\mathbb{T}^{3}\cap\left\lbrace \rho^{\epsilon} <\frac{\rho}{2}\right\rbrace}\frac{|\rho^{\epsilon}-\rho|^{\gamma}}{|\rho^{\epsilon}-\rho|^{\gamma-2}}~dx\right)^{\frac{1}{2}}\|u^{\epsilon}-u\|_{L^{2}_{x}}\\
	&\leq C\left(\int_{\mathbb{T}^{3}\cap\left\lbrace \rho^{\epsilon} <\frac{\rho}{2}\right\rbrace}\frac{|\rho^{\epsilon}-\rho|^{\gamma}}{(\frac{\rho}{2})^{\gamma-2}}~dx\right)^{\frac{1}{2}}\|u^{\epsilon}-u\|_{L^{2}_{x}}\\
	&\leq C\left(\int_{\mathbb{T}^{3}\cap\left\lbrace \rho^{\epsilon} <\frac{\rho}{2}\right\rbrace}\frac{\|\rho\|_{L^{\infty}}}{(\frac{\rho^{*}}{2})^{\gamma-2}}\left|\frac{\rho^{\epsilon}}{\rho}+1\right|^{\gamma}~dx\right)^{\frac{1}{2}}\|u^{\epsilon}-u\|_{L^{2}_{x}}\\
	&\leq C\left(\int_{\mathbb{T}^{3}\cap\left\lbrace \rho^{\epsilon} <\frac{\rho}{2}\right\rbrace}(1+(\rho^{\epsilon})^{\gamma})~dx\right)^{\frac{1}{2}}\left( \bar{C}\int_{\mathbb{T}^{3}}\rho^{\epsilon}|u-u^{\epsilon}|^{2}~dx+\bar{C}\|\nabla(u-u^{\epsilon})\|^{2}_{L^{2}_{x}}\right)^{\frac{1}{2}}\\
	&\leq C\left(\int_{\mathbb{T}^{3}\cap\left\lbrace \rho^{\epsilon} <\frac{\rho}{2}\right\rbrace}\mathcal{H}(U^{\epsilon}|U)~dx\right)^{\frac{1}{2}}\left( \bar{C}\int_{\mathbb{T}^{3}}\mathcal{H}(U^{\epsilon}|U)~dx+\bar{C}\|\nabla(u-u^{\epsilon})\|^{2}_{L^{2}_{x}}\right)^{\frac{1}{2}}\\
	&\leq C\int_{\mathbb{T}^{3}}\mathcal{H}(U^{\epsilon}|U)~dx+\frac{1}{16}\|\nabla(u^{\epsilon}-u)\|_{L^{2}_{x}}^{2}.
\end{align*}
Therefore, for $\gamma>\frac{3}{2}$, one has
\begin{align*}
\mathcal{J}_{5}^{1}
&\leq C\int_{0}^{t}\int_{\mathbb{T}^{3}} \mathcal{H}(U^{\epsilon}|U) ~dxds+\frac{1}{4}\int_{0}^{t}\int_{\mathbb{T}^{3}}|\nabla(u-u^{\epsilon})|^{2}~dxds.
\end{align*}
where $C=C(\gamma,\rho_{*},\bar{C},\|\rho\|_{L_{t,x}^{\infty}},\|\rho_{f}\|_{L_{t,x}^{\infty}},\|u_{f}-u\|_{L_{t,x}^{\infty}},\mathbb{T}^{3})>0$.

For $\mathcal{J}_{5}^{2}$,
\begin{align}\label{O-J52}
\mathcal{J}_{5}^{2}=\int_{0}^{t}\int_{\mathbb{T}^{3}}(\rho_{f}^{\epsilon }-\rho_{f})(u_{f}-u)\cdot({u}^{\epsilon}-u)~dxds,
\end{align}
since $-\Delta K*(\rho_{f}^{\epsilon}-\rho_{f})=\rho_{f}^{\epsilon}-\rho_{f}$ in the weak sense, we have
\begin{align*}
\mathcal{J}_{5}^{2}=&-\int_{0}^{t}\int_{\mathbb{T}^{3}}\Delta K*(\rho_{f}^{\epsilon}-\rho_{f})(u_{f}-u)\cdot({u}^{\epsilon}-u)~dxds\\
=&\int_{0}^{t}\int_{\mathbb{T}^{3}}\nabla K*(\rho_{f}^{\epsilon}-\rho_{f})\cdot\nabla((u_{f}-u)\cdot({u}^{\epsilon}-u))~dxds\\
=&\int_{0}^{t}\int_{\mathbb{T}^{3}}\nabla K*(\rho_{f}^{\epsilon}-\rho_{f})\cdot(\nabla(u_{f}-u)\cdot({u}^{\epsilon}-u)+(u_{f}-u)\cdot\nabla({u}^{\epsilon}-u))~dxds\\
\leq& C\int_{0}^{t}\int_{\mathbb{T}^{3}}|\nabla K*(\rho_{f}^{\epsilon}-\rho_{f})|(|{u}^{\epsilon}-u|+|\nabla({u}^{\epsilon}-u)|)~dxds\\
\leq& C\int_{0}^{t}\|\nabla K*(\rho_{f}^{\epsilon}-\rho_{f})\|_{L_{x}^{2}}\|\nabla({u}^{\epsilon}-u)\|_{L_{x}^{2}}~ds+C\int_{0}^{t}\|\nabla K*(\rho_{f}^{\epsilon}-\rho_{f})\|_{L_{x}^{2}}\|{u}^{\epsilon}-u\|_{L_{x}^{2}}~ds\\
\leq& C\int_{0}^{t}\|\nabla K*(\rho_{f}^{\epsilon}-\rho_{f})\|^{2}_{L_{x}^{2}}~ds+\frac{1}{16}\int_{0}^{t}\|\nabla({u}^{\epsilon}-u)\|^{2}_{L_{x}^{2}}~ds\\
&+\int_{0}^{t}\|\nabla K*(\rho_{f}^{\epsilon}-\rho_{f})\|_{L_{x}^{2}}\left( \bar{C}\int_{\mathbb{T}^{3}}\mathcal{H}(U^{\epsilon}|U)~dx+\bar{C}\|\nabla(u-u^{\epsilon})\|^{2}_{L^{2}_{x}}\right)^{\frac{1}{2}}~ds\\
\leq& C\int_{0}^{t}\|\nabla K*(\rho_{f}^{\epsilon}-\rho_{f})\|^{2}_{L_{x}^{2}}~ds+\frac{1}{8}\int_{0}^{t}\|\nabla({u}^{\epsilon}-u)\|^{2}_{L_{x}^{2}}~ds+C\int_{0}^{t}\int_{\mathbb{T}^{3}}\mathcal{H}(U^{\epsilon}|U)~dxds,
\end{align*}
where $C=C(\bar{C},\|\nabla(u_{f}-u)\|_{L_{t,x}^{\infty}},\|u_{f}-u\|_{L_{t,x}^{\infty}})>0$.
 Thus,
 \begin{align*}
 	\mathcal{J}_5\leq C\int_{0}^{t}\int_{\mathbb{T}^{3}} \mathcal{H}(U^{\epsilon}|U) ~dxds+\frac{3}{8}\int_{0}^{t} \int_{\mathbb{T}^{3}} |\nabla(u-u^{\epsilon})|^{2}~dxds+C\int_{0}^{t}\|\nabla K*(\rho_{f}^{\epsilon}-\rho_{f})\|^{2}_{L_{x}^{2}}~ds,
 \end{align*}
 where $C=C(\|\rho_{f}\|_{L_{t,x}^{\infty}},\rho_{*},\gamma,\bar{C},\|\rho\|_{L_{t,x}^{\infty}},\|u_{f}-u\|_{L_{t,x}^{\infty}},\|\nabla(u_{f}-u)\|_{L_{t,x}^{\infty}})>0$.

\item (Estimate of $\mathcal{J}_6$): Similar to the estimate $\mathcal{J}^{1}_5$. For $\gamma\in(\frac{3}{2},2]$,
\begin{align*}
\mathcal{J}_6&=\int_{0}^{t}\int_{\mathbb{T}^{3}}(\frac{\rho^{\epsilon}}{\rho}-1)\Delta u\cdot(u-u^{\epsilon})~dxds\\
&\leq \frac{1}{\rho_{*}}\int_{0}^{t}\int_{\mathbb{T}^{3}}|\rho^{\epsilon}-\rho||u-u^{\epsilon}||\Delta u|~dxds\\
&\leq C\int_{0}^{t}\left( \int_{\mathbb{T}^{3}} \min\left\lbrace(\rho^{\epsilon})^{\gamma-2},\rho^{\gamma-2} \right\rbrace (\rho-\rho^{\epsilon})^{2}~dx\right)^{\frac{1}{2}}\left( \int_{\mathbb{T}^{3}}(\rho^{2-\gamma}+(\rho^{\epsilon})^{2-\gamma})  |u-u^{\epsilon}|^{2}|\Delta u|^{2}~dx\right)^{\frac{1}{2}}ds\\
&\leq C\int_{0}^{t}\int_{\mathbb{T}^{3}} \mathcal{H}(U^{\epsilon}|U)~dxds+\frac{1}{16}\int_{0}^{t} \int_{\mathbb{T}^{3}} |\nabla(u-u^{\epsilon})|^{2}~dxds.
\end{align*}
For $\gamma>2$,
\begin{align*}
\mathcal{J}_6&=\left|  \int_{0}^{t}\int_{\mathbb{T}^{3}}(\frac{\rho^{\epsilon}}{\rho}-1)\Delta u\cdot(u-u^{\epsilon})~dxds\right|\\
&\leq \int_{0}^{t}\frac{\|\Delta u\|_{L_{x}^{\infty}}}{\rho_{*}}\int_{\mathbb{T}^{3}}|\rho^{\epsilon}-\rho||u-u^{\epsilon}||\Delta u|~dxds\\
&\leq C\int_{0}^{t}\int_{\mathbb{T}^{3}} \mathcal{H}(U^{\epsilon}|U)~dxds+\frac{1}{16}\int_{0}^{t} \int_{\mathbb{T}^{3}} |\nabla(u-u^{\epsilon})|^{2}~dxds.
\end{align*}
 Thus, for $\gamma>\frac{3}{2}$, we obtain
\begin{align*}
\mathcal{J}_6\leq C\int_{0}^{t}\int_{\mathbb{T}^{3}} \mathcal{H}(U^{\epsilon}|U)~dxds+\frac{1}{8}\int_{0}^{t} \int_{\mathbb{T}^{3}} |\nabla(u-u^{\epsilon})|^{2}~dxds,
\end{align*}
where $C=C(\rho_{*},\bar{C},\|\Delta u\|_{L_{t,x}^{\infty}})>0$.
\item (Estimate of $\mathcal{J}_7$):
Since
\begin{align*}
\frac{1}{2}\frac{d}{dt}\int_{\mathbb{T}^{3}}|\nabla K*(\rho_{f}^{\epsilon}-\rho_{f})|^{2}~dx&=\int_{\mathbb{T}^{3}}\nabla K*(\rho_{f}^{\epsilon}-\rho_{f})\cdot(\nabla K*(\partial_{t}\rho_{f}^{\epsilon}-\partial_{t}\rho_{f}))~dx\\
&=-\int_{\mathbb{T}^{3}}\Delta K*(\rho_{f}^{\epsilon}-\rho_{f})( K*(\partial_{t}\rho_{f}^{\epsilon}-\partial_{t}\rho_{f}))~dx\\
&=\int_{\mathbb{T}^{3}}(\rho_{f}^{\epsilon}-\rho_{f})( K*(\partial_{t}\rho_{f}^{\epsilon}-\partial_{t}\rho_{f}))~dx.
\end{align*}
we use the symmetry of $K$ to get
\begin{align*}
\int_{\mathbb{T}^{3}}&(\rho_{f}^{\epsilon}-\rho_{f})( K*(\partial_{t}\rho_{f}^{\epsilon}-\partial_{t}\rho_{f}))~dx\\
&=-\int_{\mathbb{T}^{3}\times\mathbb{T}^{3}}(\rho_{f}^{\epsilon}-\rho_{f})(x) K(x-y)(\mathrm{div}_{y}(\rho_{f}^{\epsilon}u_{f}^{\epsilon})(y)-\mathrm{div}_{y}(\rho_{f}u_{f})(y))~dxdy\\
&=\int_{\mathbb{T}^{3}\times\mathbb{T}^{3}}(\rho_{f}^{\epsilon}-\rho_{f})(x) \nabla_{y}K(x-y)((\rho_{f}^{\epsilon}u_{f}^{\epsilon})(y)-(\rho_{f}u_{f})(y))~dxdy\\
&=-\int_{\mathbb{T}^{3}\times\mathbb{T}^{3}}(\rho_{f}^{\epsilon}-\rho_{f})(x) \nabla_{x}K(x-y)((\rho_{f}^{\epsilon}u_{f}^{\epsilon})(y)-(\rho_{f}u_{f})(y))~dxdy\\
&=\int_{\mathbb{T}^{3}\times\mathbb{T}^{3}}(\rho_{f}^{\epsilon}-\rho_{f})(y) \nabla K(x-y)((\rho_{f}^{\epsilon}u_{f}^{\epsilon})(x)-(\rho_{f}u_{f})(x))~dxdy\\
&=\int_{\mathbb{T}^{3}}\nabla K*(\rho_{f}^{\epsilon}-\rho_{f})\cdot (\rho_{f}^{\epsilon}u_{f}^{\epsilon}-\rho_{f}u_{f})~dx.
\end{align*}
then
\begin{align*}
\mathcal{J}_7&+\int_{0}^{t}\int_{\mathbb{T}^{3}}{\rho}^{\epsilon}_{f}{u}^{\epsilon}_{f}\cdot\nabla K*(\rho^{\epsilon}_{f}-{\rho}_{f})~dxds-\int_{0}^{t}\int_{\mathbb{T}^{3}}{\rho}_{f}{u}_{f}\cdot\nabla K*(\rho^{\epsilon}_{f}-{\rho}_{f})~dxds\\
&=\int_{0}^{t}\int_{\mathbb{T}^{3}}{\rho}^{\epsilon}_{f}{u}_{f}\cdot\nabla K*(\rho^{\epsilon}_{f}-{\rho}_{f})~dxds-\int_{0}^{t}\int_{\mathbb{T}^{3}}{\rho}_{f}{u}_{f}\cdot\nabla K*(\rho^{\epsilon}_{f}-{\rho}_{f})~dxds\\
&=\int_{0}^{t}\int_{\mathbb{T}^{3}}({\rho}^{\epsilon}_{f}-\rho_{f}){u}_{f}\cdot\nabla K*(\rho^{\epsilon}_{f}-{\rho}_{f})~dxds\\
&=-\int_{0}^{t}\int_{\mathbb{T}^{3}}\Delta K*({\rho}^{\epsilon}_{f}-\rho_{f}){u}_{f}\cdot\nabla K*(\rho^{\epsilon}_{f}-{\rho}_{f})~dxds\\
&=\int_{0}^{t}\int_{\mathbb{T}^{3}}\nabla K*({\rho}^{\epsilon}_{f}-\rho_{f})\otimes\nabla K*(\rho^{\epsilon}_{f}-{\rho}_{f}):\nabla{u}_{f}~dxds-\frac{1}{2}\int_{0}^{t}\int_{\mathbb{T}^{3}}|\nabla K*({\rho}^{\epsilon}_{f}-\rho_{f})|^{2}\mathrm{div}_{x}u_{f}~dxds\\
&\leq C\int_{0}^{t}\int_{\mathbb{T}^{3}}|\nabla K*({\rho}^{\epsilon}_{f}-\rho_{f})|^{2}~dxds
\end{align*}
where $C=C(\|\nabla{u}_{f}\|_{L^{\infty}_{t,x}})>0$.
\end{itemize}
In conclusion, we have
\begin{align*}
\int_{\mathbb{T}^{3}}&\mathcal{H}(U^{\epsilon}|U)~dx+\frac{1}{2}\int_{\mathbb{T}^{3}}|\nabla K*({\rho}^{\epsilon}_{f}-\rho_{f})|^{2}~dx+\int_{0}^{t}\int_{\mathbb{T}^{3}}{\rho}^{\epsilon}_{f}|({u}^{\epsilon}_{f}-{u}^{\epsilon})-({u}_{f}-{u})|^{2}~dxds\\
&+\frac{1}{2}\int_{0}^{t}\int_{\mathbb{T}^{3}}|\nabla(u-{u}^{\epsilon})|^{2}~dxds\leq C\int_{0}^{t}\int_{\mathbb{T}^{3}} \mathcal{H}(U^{\epsilon}|U)~dxds+C\int_{0}^{t}\int_{\mathbb{T}^{3}}|\nabla K*({\rho}^{\epsilon}_{f}-\rho_{f})|^{2}~dxds+C\sqrt{\epsilon}.
\end{align*}
By using Gr\"{o}nwall's inequality, we obtain
\begin{align*}
\int_{\mathbb{T}^{3}}&\mathcal{H}(U^{\epsilon}|U)~dx+\frac{1}{2}\int_{\mathbb{T}^{3}}|\nabla K*({\rho}^{\epsilon}_{f}-\rho_{f})|^{2}~dx+\int_{0}^{t}\int_{\mathbb{T}^{3}}{\rho}^{\epsilon}_{f}|({u}^{\epsilon}_{f}-{u}^{\epsilon})-({u}_{f}-{u})|^{2}~dxds\\
&+\frac{1}{2}\int_{0}^{t}\int_{\mathbb{T}^{3}}|\nabla(u-{u}^{\epsilon})|^{2}~dxds\leq C\sqrt{\epsilon},
\end{align*}
where $C=C(T,\|\rho_{f}\|_{L_{t,x}^{\infty}},\rho_{*},\gamma,\bar{C},\|\rho\|_{L_{t,x}^{\infty}},\|u_{f}-u\|_{L_{t,x}^{\infty}},\|\nabla(u_{f}-u)\|_{L_{t,x}^{\infty}},\|\Delta u\|_{L_{t,x}^{\infty}},\|\nabla u_{f}\|_{L_{t,x}^{\infty}})>0$.

\subsection{Passing to the limit}\label{O-Section2-4}
We now conisder the convergence of each physical quantity. For the convergence of
$\rho^{\epsilon}$, $\rho^{\epsilon}u^{\epsilon}$, $\rho^{\epsilon}u^{\epsilon}\otimes u^{\epsilon}$, which can be obtained by the similar argument in Ref. \cite{Choi2021}.	Now we consider the convergence of $\rho_{f}^{\epsilon}$, $\rho_{f}^{\epsilon}u_{f}^{\epsilon}$, $\rho_{f}^{\epsilon}u_{f}^{\epsilon}\otimes u_{f}^{\epsilon}$.

At first, we give the convergence of $\rho_{f}^{\epsilon}$. For any test functions $\phi\in L^{2}((0,T;H^{1}{(\mathbb{T}^{3})})$, since $-\Delta K*(\rho_{f}^{\epsilon}-\rho_{f})=\rho_{f}^{\epsilon}-\rho_{f}$ in the weak sense, we have
 \begin{align*}
 \int_{0}^{T}&\int_{\mathbb{T}^{3}}(\rho_{f}^{\epsilon}- \rho_{f} )\phi~dxdt\nonumber\\
 =&-\int_{0}^{T}\int_{\mathbb{T}^{3}}\Delta K*(\rho_{f}^{\epsilon}-\rho_{f}) \phi~dxdt\\
 =&\int_{0}^{T}\int_{\mathbb{T}^{3}}\nabla K*(\rho_{f}^{\epsilon}-\rho_{f})\cdot \nabla\phi~dxdt\\
 \leq &\|\nabla K*(\rho_{f}^{\epsilon}-\rho_{f})\|_{L_{t,x}^{2}}\|\nabla \phi\|_{L_{t,x}^{2}}\\
 \leq & C\sqrt{\epsilon},
 \end{align*}
 thus, as $\epsilon\rightarrow 0$, one has
 \begin{equation}
 \rho_{f}^{\epsilon}\longrightarrow \rho_{f} \ \ \ \mathrm{in}  \ \ \  L^{2}(0,T; H^{-1}(\mathbb{T}^{3})).
 \end{equation}

\noindent Next, we give the convergence of $\rho_{f}^{\epsilon}u_{f}^{\epsilon}$. For any test functions $\phi\in C_{c}^{\infty}(\mathbb{T}^{3}\times\mathbb{R}^{3})$, one obtains
\begin{align*}
\int_{0}^{T}&\int_{\mathbb{T}^{3}}(\rho_{f}^{\epsilon}u_{f}^{\epsilon}-\rho_{f} u_{f})\varphi~dxdt\nonumber\\
=&\int_{0}^{T}\int_{\mathbb{T}^{3}}(\rho_{f}^{\epsilon}-\rho_{f})u_{f} \varphi~dxdt+\int_{0}^{T}\int_{\mathbb{T}^{3}}\rho_{f}^{\epsilon}(u_{f}^{\epsilon}-u_{f})\varphi~dxdt\nonumber\\
\leq&\left(\int_{0}^{T}\int_{\mathbb{T}^{3}}\rho^{\epsilon}_{f}~dxds\right)^{\frac{1}{2}}\left(\int_{0}^{T}\int_{\mathbb{T}^{3}}\rho^{\epsilon}_{f}|u_{f}^{\epsilon}-u_{f}|^{2}dxdt\right)^{\frac{1}{2}} \|\varphi\|_{L_{t,x}^{\infty}}+\int_{0}^{T}\int_{\mathbb{T}^{3}}(\rho_{f}^{\epsilon}-\rho_{f})u_{f} \varphi~dxdt\nonumber\\
&\longrightarrow  0 (\epsilon\longrightarrow 0),
\end{align*}
thus
\begin{equation}
	\rho_{f}^{\epsilon}u_{f}^{\epsilon}\longrightarrow \rho_{f} u_{f} \ \ \ \mathrm{in} \ \ \  \mathcal{D}'((0,T)\times\mathbb{T}^{3}).
\end{equation}

Finally, we consider the convergence of
$f^{\epsilon}\rightarrow \rho_{f}\otimes\delta_{\xi=u_{f}}$ as $\epsilon\rightarrow 0$.
For any test functions $\phi\in C_{c}^{\infty}(\mathbb{T}^{3}\times\mathbb{R}^{3})$, one has
\begin{align*}
\int_{0}^{T}&\int_{\mathbb{T}^{3}\times\mathbb{R}^{3}}({f}^{\epsilon}- \rho_{f}\otimes\delta_{\xi=u_{f}})\varphi~d\xi dxdt\nonumber\\
=&\int_{0}^{T}\int_{\mathbb{T}^{3}\times\mathbb{R}^{3}} \varphi(x,\xi)f^{\epsilon}~dxdt-\int_{0}^{T}\int_{\mathbb{T}^{3}\times\mathbb{R}^{3}}\varphi(x,\xi)\rho_{f}\otimes\delta_{\xi=u_{f}}~d\xi dxdt\nonumber\\
=&\int_{0}^{T}\int_{\mathbb{T}^{3}\times\mathbb{R}^{3}} \varphi(x,\xi)f^{\epsilon}~dxdt-\int_{0}^{T}\int_{\mathbb{T}^{3}}\varphi(x,u_{f})\rho_{f}~dxdt\nonumber\\
=&\int_{0}^{T}\int_{\mathbb{T}^{3}\times\mathbb{R}^{3}} (\varphi(x,\xi)-\varphi(x,u_{f}))f^{\epsilon}~dxdt+\int_{0}^{T}\int_{\mathbb{T}^{3}}\varphi(x,u_{f})(\rho_{f}^{\epsilon}-\rho_{f})~dxdt\nonumber\\
=:&A_{1}+A_{2}.
\end{align*}
For $A_{1}$,
\begin{align*}
A_{1}\leq& \|\nabla_{\xi}\varphi\|_{L^{\infty}}\int_{0}^{T}\int_{\mathbb{T}^{3}\times\mathbb{R}^{3}} |\xi-u_{f}|f^{\epsilon}~d\xi dxdt\\
\leq& C\left(\int_{0}^{T}\int_{\mathbb{T}^{3}\times\mathbb{R}^{3}} |\xi-u_{f}|^{2}f^{\epsilon}~d\xi dxdt\right)^{\frac{1}{2}}\left(\int_{0}^{T}\int_{\mathbb{T}^{3}\times\mathbb{R}^{3}} f^{\epsilon}~d\xi dxdt\right)^{\frac{1}{2}}\\
\leq&C\sqrt{\epsilon}\longrightarrow 0 \ (\epsilon\longrightarrow 0),
\end{align*}
where we used the fact that
\begin{align*}
 \int_{0}^{T}&\int_{\mathbb{T}^{3}\times\mathbb{R}^{3}} |\xi-u_{f}|^{2}f^{\epsilon}~d\xi dxdt\\
=&\int_{0}^{T}\int_{\mathbb{T}^{3}\times\mathbb{R}^{3}} |\xi-u^{\epsilon}_{f}+u^{\epsilon}_{f}-u_{f}|^{2}f^{\epsilon}~d\xi dxdt\\
\leq&2\int_{0}^{T}\int_{\mathbb{T}^{3}\times\mathbb{R}^{3}}(|\xi-u^{\epsilon}_{f}|^{2}+|u^{\epsilon}_{f}-u_{f}|^{2})f^{\epsilon}~d\xi dxdt\\
\leq&C\epsilon+\int_{0}^{T}\int_{\mathbb{T}^{3}}\mathcal{H}(U^{\epsilon}|U)~dxdt\\
\leq&C\sqrt{\epsilon}.
\end{align*}
Since $\rho_{f}^{\epsilon}\longrightarrow \rho_{f} \  \mathrm{in}  \   L^{2}(0,T; H^{-1}(\mathbb{T}^{3}))$, then  $A_{2}\rightarrow 0$ as $\epsilon\rightarrow 0$. Thus, we have
\begin{equation}
{f}^{\epsilon}\longrightarrow \rho_{f}\otimes\delta_{\xi=u_{f}} \ \ \ \mathrm{in} \ \ \  \mathcal{D}'((0,T)\times\mathbb{T}^{3}\times\mathbb{R}^{3}).
\end{equation}


\section*{Declarations}
\noindent {\textbf{A conflict of interest statement} }  On behalf of all authors, the corresponding author states that there is no conflict of interest.

\noindent {\textbf{Data availability statement} } Data sharing is not applicable to this article as no datasets were generated or analyzed during the current study.

\end{document}